\author{J.~F.~Feinstein\\School of Mathematical Sciences, University of Nottingham\\University Park, Nottingham, NG7 2RD, UK\\E-mail: Joel.Feinstein@nottingham.ac.uk\and M.~J.~Heath\footnote{This author is supported by post-doctoral grant SFRH/BPD/40762/2007 from FCT (Portugal) and completed most of this research while supported by a PhD grant from the EPSRC (UK).}\phantom{*}\footnote{This paper contains work from this author's PhD thesis \cite{MeThesis}.}\\Departamento de Matem\'atica, Instituto Superior T\'ecnico\\ Av.~Rovisco Pais, 1049-001 Lisboa, Portugal\\E-mail: mheath@math.ist.utl.pt}
\title{Swiss cheeses, rational approximation and universal plane curves.}
\date{}
\newcommand{\C}{\mathbb C}
\newcommand{\R}{\mathbb R}
\newcommand{\N}{\mathbb N}
\newcommand{\eps}{\varepsilon}
\renewcommand{\-}{\setminus}
\newcommand{\norm}[1]{\left\Vert #1\right\Vert}
\newcommand{\abs}[1]{\left\vert #1 \right\vert}
\newcommand{\seq}[2]{(#1_{#2})_{#2\in\N}}
\newcommand{\func}[5]
{\begin{eqnarray*}
#1&:&#2\rightarrow#3,\\
&&#4\mapsto #5
\end{eqnarray*}}
\newcommand{\diam}{\mathrm{diam}}
\renewcommand{\d}{\,\mathrm d}
\newcommand{\Sier}{{S}ierpi\'nski }
\newcommand{\up}{\mathrm}
 \newcommand{\lopen}{\mathopen{]}}
 \newcommand{\ropen}{{\mathclose{[}}}
\newtheorem{thm}{Theorem}[section]
\newtheorem{cor}[thm]{Corollary}
\newtheorem{lem}[thm]{Lemma}
\newtheorem{prop}[thm]{Proposition}
\newtheorem{question}[thm]{Question}
\theoremstyle{definition}
\newtheorem{dfn}[thm]{Definition}
\theoremstyle{definition}
\theoremstyle{definition}
\newtheorem{ex}[thm]{Example}
\begin{document}
\baselineskip=17pt
\maketitle
\renewcommand{\thefootnote}{}

\footnote{2000 \emph{Mathematics Subject Classification}: Primary  46J10; Secondary  54H99.}

\footnote{\emph{Key words and phrases}: Swiss cheeses, rational approximation, uniform algebras.}

\renewcommand{\thefootnote}{\arabic{footnote}}

\setcounter{footnote}{0}
\begin{abstract}
In this paper we consider the compact plane sets known as \emph{Swiss cheese sets}, which are a useful source of  examples in the theory of uniform algebras and rational approximation. We develop a theory of \emph{allocation maps} connected to such sets and we use this theory to modify examples previously constructed in the literature to obtain examples homeomorphic to the \Sier carpet. Our techniques also allow us to avoid certain technical difficulties in the literature.
\end{abstract}
\section{Introduction and motivation}
In this paper we shall concern ourselves with ``Swiss cheese'' constructions. These represent a particular method for constructing compact subsets of the complex plane that has been used extensively in the theory of rational approximation and, more generally, in the theory of uniform algebras. In general little is specified about the topology of the sets produced by this technique. Since  uniform algebra theory has strong connections to topology, the topological properties of the sets on which we build our examples is an obvious thing to study. In this paper we shall show that it is possible to modify many Swiss-cheese-based examples related to uniform algebras and rational approximation, so that our compact plane set is homeomorphic to the well-known \Sier carpet.
\subsection{Basic uniform-algebraic concepts}
Throughout this paper by a \emph{compact space} we will mean a non-empty, compact, Hausdorff topological space. Let $X$ be a non-empty, locally compact, Hausdorff space. We denote the set of all continuous functions from $X$ to $\C$ which tend to zero at infinity by $C_0(X)$. If $X$ is a compact space this is equal to the set of all continuous $\C$-valued functions, which we denote by $C(X)$. Equipping $C(X)$ with the usual pointwise operations makes it a commutative, semisimple, complex algebra. If we further equip  $C(X)$ with the supremum norm $\norm\cdot_\infty$, it is standard that it is then a Banach algebra. We always treat $C(X)$ as a Banach algebra with this norm. We call a closed subalgebra $A$ of $C(X)$ a \emph{uniform algebra on $X$}
if it contains the constant functions and if, for all $x,y\in X$ with $x\ne y$, there is $f\in A$ with $f(x)\ne f(y)$. A uniform algebra on $X$ is \emph{trivial} if it is equal to $C(X)$ and \emph{non-trivial} otherwise.

A \emph{character} on a uniform algebra $A$ is a non-zero algebra homomorphism from $A$ into $\C$. A uniform algebra $A$ on a compact space $X$ is \emph{natural} if the only characters from $A$ into $\C$ are evaluations at points of $X$.

We shall use the term \emph{plane set} to mean ``subset of the complex plane''. For a non-empty, compact plane set $X$ we define
$R_0(X)$ to be the subalgebra of $C(X)$ consisting of functions $f=g|_X$ where $g:\C\rightarrow \C\cup\{\infty\}$ is a
rational function with $\infty\not\in g(X)$. We define $R(X)$ to be the supremum-norm closure of $R_0(X)$ in $C(X)$. It is standard that $R(X)$ is a natural uniform algebra on $X$.

\begin{dfn}
A uniform algebra, $A$, on a compact space, $X$,  is \emph{essential} if, for each closed, non-empty proper subset, $Y$, of $X$, there is a function
$f\in C(X)\-A$ such that $f|_Y=0$.
\end{dfn}
\begin{dfn}
 Let $X$ be a compact space, let $\mu$ be a regular Borel measure on $X$ and let $U\subset C(X)$. We say that $\mu$ is an
\emph{annihilating measure for} $U$ if $\int_X f \d\mu=0$ for all $f\in U$. We shall denote by $M(X)$ the Banach space of regular, complex
Borel measures on $X$ with the total variation norm.
\end{dfn}

 The following result is \cite[Theorem 2.8.1]{Browder} together with some observations made in the proof of that theorem.
\begin{prop}\label{essential}
Let $A$ be a uniform algebra on a compact space, $X$ and let $E(A)$ be the closure in $X$ of the union of the supports of all annihilating measures for $A$ on $X$.  Then $E(A)$ is the unique, minimal, closed subset of $X$ such that, for all $f\in C(X)$ with $f|_{E(A)}\subseteq\{0\}$, we have $f\in A$. Furthermore, $A|_{E(A)}$ is uniformly closed in $C(E(A))$, and
\[A=\{f\in C(X):f|_E\in A|_E\}.\]
The uniform algebra $A$ is essential if and only if $E(A)=X$, and $A=C(X)$ if and only if $E(A)=\emptyset$.
\end{prop}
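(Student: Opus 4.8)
The plan is to reformulate membership in $A$ by duality and then read off each assertion. Write $A^\perp=\{\mu\in M(X):\int_X f\d\mu=0\text{ for all }f\in A\}$ for the space of annihilating measures, set $E=E(A)$, and for closed $Y\subseteq X$ write $J_Y=\{f\in C(X):f|_Y=0\}$. Since $A$ is a closed subspace of $C(X)$ and $M(X)=C(X)^*$ by the Riesz representation theorem, the bipolar theorem (a consequence of Hahn--Banach) supplies the key equivalence
\[ f\in A \iff \int_X f\d\mu=0\text{ for all }\mu\in A^\perp. \]
First I would establish the absorbing property of $E$: if $f\in C(X)$ vanishes on $E$, then since $\up{supp}(\mu)\subseteq E$ for every $\mu\in A^\perp$, the function $f$ vanishes on $\up{supp}(\mu)$, so $\int_X f\d\mu=0$, and the displayed equivalence gives $f\in A$.

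For minimality and uniqueness, I would take any closed $Y\subseteq X$ with $J_Y\subseteq A$ and show $E\subseteq Y$. Each $\mu\in A^\perp$ then annihilates every continuous function vanishing on $Y$; the standard fact that such a measure is supported in $Y$ forces $\up{supp}(\mu)\subseteq Y$ for all $\mu\in A^\perp$, whence $E\subseteq Y$ because $Y$ is closed. (If some $p\in\up{supp}(\mu)\setminus Y$, a continuous function supported in a neighbourhood of $p$ disjoint from $Y$ would lie in $J_Y\subseteq A$ yet have nonzero $\mu$-integral, a contradiction.) Together with the previous paragraph, $E$ itself has the absorbing property and is contained in every set that does, so it is the unique minimal such set.

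Next I would prove the representation $A=\{f\in C(X):f|_E\in A|_E\}$: the inclusion $\subseteq$ is immediate, and conversely if $f|_E=g|_E$ with $g\in A$, then $f-g$ vanishes on $E$, so $f-g\in A$ by the absorbing property and hence $f\in A$. The main technical point is that $A|_E$ is uniformly closed, which I would obtain by a quotient argument. Tietze's extension theorem identifies $C(X)/J_E$ isometrically with $C(E)$ via the restriction map $f+J_E\mapsto f|_E$. The absorbing property says precisely that $J_E\subseteq A$, so $A/J_E$ is a closed subspace of $C(X)/J_E$ (its preimage under the quotient map is $A+J_E=A$, which is closed), and under the isometric identification it is carried onto $A|_E$; hence $A|_E$ is closed in $C(E)$. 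I expect verifying that this identification is isometric and carries $A$ exactly onto $A|_E$ to be the most delicate step.

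Finally the last two assertions follow. If $A=C(X)$ then the only annihilating measure is $0$, so $E=\emptyset$; conversely $E=\emptyset$ makes the absorbing property assert that every $f\in C(X)$ lies in $A$. For essentiality, a closed nonempty proper subset $Y$ witnessing non-essentiality is exactly one with $J_Y\subseteq A$, that is, one with $E\subseteq Y\subsetneq X$; such a $Y$ exists iff $E\ne X$ (taking $Y=E$ when $E\neq\emptyset$, or a singleton when $E=\emptyset$), giving that $A$ is essential iff $E=X$.
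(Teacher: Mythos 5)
Your proof is correct, and it follows essentially the same route as the source the paper relies on: the paper gives no proof of this proposition, citing instead Theorem 2.8.1 of Browder's book, whose argument is precisely the duality scheme you use (Riesz representation plus Hahn--Banach to characterise membership in $A$ by annihilating measures, the regularity argument localising $\mathrm{supp}(\mu)$ inside any closed $Y$ with $J_Y\subseteq A$, and the isometric identification $C(X)/J_{E}\cong C(E)$ to obtain closedness of $A|_{E}$). One pedantic caveat, attributable to the statement rather than to your argument: if $X$ is a singleton then $A=C(X)$ is vacuously essential while $E(A)=\emptyset\neq X$, so your final equivalence (``such a $Y$ exists iff $E\ne X$''), like the proposition itself, implicitly assumes $X$ has more than one point.
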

We may think of the essential set of $A$ as being ``the set on which $A$ is non-trivial'' and ``essential'' as meaning ``everywhere
 non-trivial''.

\begin{dfn}
Let $A$ be a commutative Banach algebra and let $\psi$ be a character on $A$. A \emph{point derivation} at $\psi$ is a linear functional $d$ on $A$ such that
\[d(ab)=\psi(a)d(b)+\psi(b)d(a),\quad \textrm{for all }a,b\in A.\]
Let $n\in\N\cup\{\infty\}$. A \emph{point derivation of order} $n$ at $\phi$ is a sequence $(d_k)_{k=0}^n$ of linear functionals, such that $d_0=\phi$ and, for each $i<n+1$,
\[
d_i(fg)=\sum_{k=0}^id_k(f)d_{i-k}(g).
\]
We call $(d_k)_{k=0}^n$ \emph{bounded} if, for each $i<n+1$, $d_i$ is a bounded linear functional.
\end{dfn}

Let $A$ be a natural uniform algebra on a compact space $X$. We say that: $A$ is \emph{regular} if, for all $x\in X$ and all
compact sets $E\subseteq X\setminus\{x\}$, there exists $f\in A$ such that $f(E)\subseteq \{1\}$ and $f(x)= 0$; $A$ is \emph{normal} if, for every closed set $F\subseteq X$ and every compact set $E \subseteq X\setminus F$, there exists $f\in A$  such that $f(E)\subseteq \{1\}$ and $f(F)\subseteq \{0\}$. It is standard that regularity and normality are equivalent (see \cite[Proposition 4.1.18]{Dales}).
\subsection{Connections between uniform algebras and topology}
In order to motivate our results we shall discuss connections between the theory of uniform algebras and topology. The key observation is the following, which is basically trivial.
\begin{prop}
Let $\sf P$ be a property, which a Banach algebra may hold and which is invariant under Banach algebra isomorphism. Then, for a compact 
space $X$, ``there exists a uniform algebra on $X$ which satisfies $\sf P$'' and ``there exists a natural uniform algebra on $X$ which 
satisfies $\sf P$'' are topological properties of $X$.
\end{prop}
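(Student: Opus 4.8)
The plan is to show that a homeomorphism between compact spaces transports uniform algebras in a way that preserves both Banach algebra isomorphism type and naturality; the proposition then follows at once from the definition of a topological property as one invariant under homeomorphism. Concretely, suppose $X$ and $Y$ are homeomorphic compact spaces and fix a homeomorphism $h\colon X\to Y$. The central object is the composition operator
\[
\Psi\colon C(X)\to C(Y),\qquad f\mapsto f\circ h^{-1}.
\]
First I would verify that $\Psi$ is an isometric isomorphism of Banach algebras: it is linear and multiplicative, it preserves the supremum norm because $h^{-1}$ is a bijection, and its inverse is the map $g\mapsto g\circ h$.

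Next, given a uniform algebra $A$ on $X$ that satisfies $\sf P$, I would set $B:=\Psi(A)\subseteq C(Y)$ and check that $B$ is a uniform algebra on $Y$. Since $\Psi$ is an isometric algebra isomorphism, $B$ is a closed subalgebra of $C(Y)$; it contains the constants because $A$ does and $\Psi$ fixes constant functions; and it separates the points of $Y$ because $A$ separates the points of $X$ and $h$ is a bijection. As $\Psi|_A\colon A\to B$ is a Banach algebra isomorphism and $\sf P$ is invariant under such isomorphisms, $B$ satisfies $\sf P$. This shows that the existence of a uniform algebra on $X$ satisfying $\sf P$ implies the existence of one on $Y$; the reverse implication follows by applying the same argument to $h^{-1}$, so the first assertion is indeed a topological property of $X$.

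For the second assertion I would track how characters transform. Writing $\theta=\Psi|_A$, the map $\chi\mapsto\chi\circ\theta$ is a bijection from the characters of $B$ onto the characters of $A$. The key computation is that point evaluations correspond to point evaluations: for $y\in Y$ and $f\in A$ one has
\[
(\eps_y^B\circ\theta)(f)=(f\circ h^{-1})(y)=f(h^{-1}(y))=\eps_{h^{-1}(y)}^A(f),
\]
where $\eps$ denotes evaluation. Since $h$ is a bijection, every evaluation character of $A$ arises this way, and inverting $\theta$ shows correspondingly that a character of $B$ is a point evaluation exactly when its image under $\chi\mapsto\chi\circ\theta$ is. Hence $A$ is natural precisely when $B$ is, so the existence of a \emph{natural} uniform algebra satisfying $\sf P$ also transfers between $X$ and $Y$.

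I do not anticipate a genuine obstacle, in keeping with the authors' remark that the result is essentially trivial. The only point requiring care is the bookkeeping in the naturality step, namely confirming that the bijection on character spaces induced by $\theta$ carries the point evaluations on $Y$ exactly onto the point evaluations on $X$.
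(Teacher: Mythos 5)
Your proof is correct and is precisely the standard argument the paper has in mind: the authors state the proposition without proof, calling it ``basically trivial,'' and your composition-operator $f\mapsto f\circ h^{-1}$ together with the character bookkeeping for naturality is exactly the routine verification being elided. No gaps; the point evaluation correspondence $\eps_y^B\circ\theta=\eps_{h^{-1}(y)}^A$ is the one detail worth writing down, and you did.
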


Thus, for a compact space $X$, it makes sense to consider questions of the form: ``Which Banach-algebraic properties may a (natural)
uniform algebra on $X$ have?'' These sorts of question have been little studied. In the many examples of uniform algebras constructed using Swiss-cheese techniques there is not typically any mention made of the topological properties of the underlying compact space.

Now, an obvious technique for constructing uniform algebras with different sets of properties on a fixed compact space is as follows. Let $Y$ be a compact space, let $X$ be a compact subspace of $Y$, and let $A$ be a uniform algebra on $X$.  We may define a uniform
algebra, $A^{(Y)}$, on $Y$ thus
\[A^{(Y)}=\{f\in C(Y):f|_X\in A\}.\]
Many properties of $A$ are then necessarily shared by $A^{(Y)}$. For example the following are easily proven and probably all
well-known (see, for example, \cite[Lemma 2.4.9]{MeThesis} for details).
\begin{lem}\label{subinj}
 Let $A$ be a uniform algebra on a compact space $X$, and let $Y$ be a compact space such that $X\subseteq Y$. Then:
\begin{itemize}
 \item[(a)] $A^{(Y)}$ is trivial if and only if $A$ is trivial;
\item[(b)] $A^{(Y)}$ is natural if and only if $A$ is natural;
\item[(c)] $A^{(Y)}$ is normal if and only if $A$ is normal;
\item [(d)]if $z\in X$, $n\in\N\cup\{\infty\}$ and $A$ has a non-zero bounded point derivation of order $n$ at $z$, then $A^{(Y)}$ has
a non-zero, bounded point derivation of order $n$ at $z$.
\end{itemize}
\end{lem}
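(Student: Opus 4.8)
The plan is to exploit a single structural observation and then treat the four parts in turn. Write $\rho\colon A^{(Y)}\to C(X)$ for the restriction map $f\mapsto f|_X$; this is a contractive algebra homomorphism, its image is exactly $A$ (surjectivity onto $A$ follows from the Tietze extension theorem: every $g\in A\subseteq C(X)$ extends to some $f\in C(Y)$, and then $f\in A^{(Y)}$ with $\rho(f)=g$), and its kernel is the ideal $I_X:=\{f\in C(Y):f|_X=0\}$. Note that $I_X\subseteq A^{(Y)}$ and that, since $X$ is closed in $Y$, restriction to the open set $U:=Y\setminus X$ identifies $I_X$ isometrically and isomorphically with $C_0(U)$. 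This already settles (a): if $A=C(X)$ then every $f\in C(Y)$ has $\rho(f)\in C(X)=A$, so $A^{(Y)}=C(Y)$; conversely if $A^{(Y)}=C(Y)$ then, given $g\in C(X)$, a Tietze extension $f\in C(Y)=A^{(Y)}$ has $\rho(f)=g\in A$, whence $A=C(X)$.

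For (b) I would analyse the characters of $A^{(Y)}$ via their behaviour on the ideal $I_X$. Suppose first $A$ is natural and let $\phi$ be a character of $A^{(Y)}$. If $\phi|_{I_X}=0$ then $\phi$ factors through $A^{(Y)}/I_X\cong A$, yielding a character of $A$, which by naturality is evaluation at some $x\in X$; hence $\phi(f)=\rho(f)(x)=f(x)$. If instead $\phi|_{I_X}\neq0$, then $\phi$ restricts to a nonzero character of $I_X\cong C_0(U)$, which must be evaluation at some $y\in U=Y\setminus X$; the usual ideal argument (for any $f\in A^{(Y)}$ choose $h\in I_X$ with $\phi(h)=h(y)\neq0$ and use $\phi(f)\phi(h)=\phi(fh)=(fh)(y)=f(y)h(y)$) then gives $\phi(f)=f(y)$. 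Either way $\phi$ is evaluation at a point of $Y$, so $A^{(Y)}$ is natural. Conversely, if $A^{(Y)}$ is natural and $\psi$ is a character of $A$, then $\psi\circ\rho$ is a character of $A^{(Y)}$, hence evaluation at some $y\in Y$; as $\psi\circ\rho$ kills $I_X$, the point $y$ cannot lie in $U$ (where some $h\in I_X$ is nonzero), so $y\in X$ and $\psi$ is evaluation at $y$.

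For (c) the definition of normality presupposes naturality, so by (b) we may assume throughout that both algebras are natural. The forward implication is immediate: given closed $F\subseteq X$ and compact $E\subseteq X\setminus F$, these are respectively closed and compact in $Y$, so a separating $f\in A^{(Y)}$ restricts to a separating $\rho(f)\in A$. For the converse, given closed $F\subseteq Y$ and compact $E\subseteq Y\setminus F$, I would first apply normality of $A$ to the disjoint pair $F\cap X$ (closed in $X$) and $E\cap X$ (compact) to obtain $a\in A$ with $a=1$ on $E\cap X$ and $a=0$ on $F\cap X$, and lift it to some $\alpha\in A^{(Y)}$ with $\rho(\alpha)=a$. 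It then remains to correct $\alpha$ off $X$ by adding a function of $I_X$: on the closed set $Z:=X\cup E\cup F$ define $v$ to be $0$ on $X$, $1-\alpha$ on $E$, and $-\alpha$ on $F$. One checks this is well defined and continuous (on the overlaps $E\cap X$ and $F\cap X$ the prescriptions agree because there $a$ equals $1$ and $0$ respectively, and $E\cap F=\emptyset$), extend it by Tietze to $w\in C(Y)$, and set $f:=\alpha+w$. Since $w|_X=0$ we have $f\in A^{(Y)}$, while $f=1$ on $E$ and $f=0$ on $F$, giving normality of $A^{(Y)}$.

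Finally, for (d) I would push a bounded point derivation $(d_k)_{k=0}^n$ of order $n$ at $z$ forward through $\rho$, defining $D_k:=d_k\circ\rho$. Since $z\in X$ we have $D_0(f)=(f|_X)(z)=f(z)$, so $D_0$ is evaluation at $z$; the Leibniz identities for $(D_k)$ follow at once from those for $(d_k)$ together with $\rho(fg)=\rho(f)\rho(g)$; each $D_k$ is bounded because $\rho$ is contractive; and $(D_k)$ is nonzero because $\rho$ maps onto $A$, so any $g$ with $d_k(g)\neq0$ lifts to $f$ with $D_k(f)\neq0$. The only genuinely nontrivial points in the whole lemma are the treatment of characters not annihilating $I_X$ in (b) and the consistency and continuity check for the corrector $v$ in (c); everything else is a formal consequence of the surjective homomorphism $\rho$ with kernel $I_X\subseteq A^{(Y)}$.
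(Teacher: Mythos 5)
Your proof is correct, and a direct comparison with the paper is not really possible: the paper offers no proof of this lemma at all, remarking only that the statements are ``easily proven and probably all well-known'' and deferring to Lemma 2.4.9 of the second author's thesis \cite{MeThesis}. Your argument is a clean, self-contained route, organised around the one structural fact that the restriction map $\rho\colon A^{(Y)}\to A$ is a surjective contractive homomorphism (surjective by Tietze, which applies since the compact Hausdorff space $Y$ is normal and $X$ is closed in it) with kernel $I_X\cong C_0(Y\setminus X)$ contained in $A^{(Y)}$. Given that, (a) and (d) are indeed formal, and your two genuinely substantive steps hold up: in (b), the dichotomy according to whether a character kills $I_X$ is the standard way to compute characters of an algebra containing a well-understood ideal, and the identification of nonzero characters of $C_0(U)$ with evaluations at points of $U$ is legitimate even for a priori unbounded characters (pass to the unitization $C(U^+)$ and use the classical invertibility argument for characters of $C(K)$), while the multiplication-by-$h$ trick correctly extends the evaluation from $I_X$ to all of $A^{(Y)}$ because $I_X$ is an ideal in $C(Y)$. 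In (c), your corrector $v$ on $Z=X\cup E\cup F$ is well defined exactly because $a=1$ on $E\cap X$, $a=0$ on $F\cap X$ and $E\cap F=\emptyset$, and continuity follows from the pasting lemma for a finite closed cover; then $w\in I_X$ gives $f=\alpha+w\in A^{(Y)}$ with $f(E)\subseteq\{1\}$ and $f(F)\subseteq\{0\}$. Two points worth making explicit if you write this up: in (c) you should note, as you implicitly do via part (b), that normality in this paper presupposes naturality, so the reduction ``both algebras are natural'' is needed before the separation argument even parses; and it does no harm to record that $A^{(Y)}$ is itself a uniform algebra on $Y$ (constants are clear, points of $Y\setminus X$ are separated from everything by Urysohn functions in $I_X$, and closedness follows since $\rho$ is contractive and $A$ is closed), since the lemma tacitly assumes this.
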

Hence we may, for example, construct a non-trivial, natural, normal, uniform algebra on the compact unit disc. These example are somewhat artificial, since they are not essential.
\subsection{A survey of the use of Swiss cheese constructions in the theory of uniform algebras}
Examples in the theory of uniform algebras are often constructed by considering compact subsets of the complex plane obtained by
removing some sequence of open discs from a compact disc. Sets built in  such a way are usually called ``Swiss cheeses'' or ``Swiss
cheese sets''; we shall use the term ``Swiss cheese'' in a related but different sense, and we note that \emph{every} compact plane set 
may be constructed in this way. We let $X$ be a compact plane  set constructed by means of a Swiss cheese and  consider the uniform
algebra $R(X)$. By placing conditions on the radii and centres of the discs to be removed we are able to control certain
Banach-algebraic properties of $R(X)$.

For a (closed or open) disc $D$ in the plane, we let $r(D)$ be the radius of $D$. If $D$ is the empty set or a singleton we say
$r(D)=0$.
\begin{dfn}
 We shall call a pair, $\mathbf D=(\Delta, \mathcal D)\in\mathcal P(\C)\times \mathcal P(\mathcal{P}(\C))$, a \emph{Swiss cheese}
if $\Delta$ is a compact disc and $\mathcal D$ is a countable or finite collection of open discs. Let $\mathbf D=(\Delta, \mathcal D)$ be a Swiss cheese. We say that $\mathbf D$ is:
\emph{semiclassical} if  the discs in
$\mathcal D$ intersect neither one another, nor $\C\-\Delta$, if, for each $D\in\mathcal D$, $\overline D\subsetneq\Delta$ and $\sum_{D\in\mathcal D} r(D)< \infty$;
\emph{classical} if the closures of the discs in
$\mathcal D$ intersect neither one another nor $\C\-\up{int}\,{\Delta}$, and  $\sum_{D\in\mathcal D} r(D)< \infty$; \emph{finite} if $\mathcal D$ is finite.

Let $\mathbf D=(\Delta, \mathcal D)$ be a Swiss cheese. We call the plane set
$X_\mathbf D:=\Delta\-\bigcup\mathcal D$ the \emph{associated Swiss cheese set}. We say that a plane set $X$ is: a \emph{semiclassical 
Swiss cheese set} if there is a semiclassical Swiss
cheese $\mathbf D$ such that $X=X_\mathbf D$; a \emph{classical Swiss cheese set} if there is a classical Swiss
cheese $\mathbf D$ such that $X=X_\mathbf D$.
\end{dfn}
The earliest use of a Swiss cheese set in the theory of rational approximation  was in \cite{Roth}, where Roth constructed a classical Swiss cheese set $K$ with empty interior
such that $R(K)\ne C(K)$. This showed that there are compact plane sets such that $R(X)\ne A(X)$ where $A(K)$ is the uniform algebra of continuous functions on $K$ which are analytic
on the interior of $K$.   Roth's proof was essentially the same as that of the Theorem \ref{Swissess}, below.

A second example, showing how careful choice of the discs to be removed allows us to control the properties of $R(X)$ was given by
Steen in \cite{Steen}. This example is a classical Swiss cheese set $X$ such that $R(X)$ contains a non-constant, real valued function, 
something that had been conjectured to be impossible. Furthermore this function depended only on the real part of the independent
variable.

 We shall concentrate on Swiss cheeses
$\mathbf D$ such that the associated Swiss cheese set, $X_{\mathbf D}$, has empty interior in $\C$.
 We mention in passing
that Swiss cheese sets with non-empty interior are used, for example, in Examples 9.1, 9.2 and 9.3 of \cite{Gamelin}, to
demonstrate that a compact plane set $K$ may have dense interior and yet have $R(K)\ne A(K).$

We introduce some notation for integration over paths and chains. Further details may be found in Chapter 10 of \cite{Rudin}.
Let $(\gamma_1,\dots,\gamma_k)$ and $(\delta_1,\dots, \delta_n)$ be finite sequences of piecewise smooth paths in the plane. We say $(\gamma_1,\dots,\gamma_k)$ and $(\delta_1,\dots, \delta_n)$ are equivalent if, for all $f\in C_0(\C)$, we have
\[\sum_{i=1}^k\int_{\gamma_i}f \d z=\sum_{i=1}^n\int_{\delta_i}f \d z.\]
It is standard that this defines an equivalence relation on the set of all such sequence; we call the equivalence classes induced by
this relation \emph{chains}. We denote the chain containing $(\gamma_1,\dots, \gamma_k)$ by $\gamma_1\dotplus\dots\dotplus\gamma_k$.
Let $\Gamma=\gamma_1\dotplus\dots\dotplus\gamma_k$. We define integration over $\Gamma$ as follows:
\[\int_\Gamma f \d z:=\sum_{i=1}^k \int_{\gamma_i} f \d z\qquad(f\in C_0(\C)).\]

If $\gamma_1,\dots\gamma_n$
are chains we write $\gamma_1\dotplus\dots\dotplus\gamma_n$ for the chain with
\[\int_\Gamma f \d z=\int_{\gamma_1}f\d z+\dots+\int_{\gamma_n}f\d z\qquad (f\in C_0(\C)).\]
For a chain or piecewise smooth path $\gamma$ we define $\mu_\gamma$ to be the unique, regular, Borel measure on $\C$ satisfying
\[
\int_{\gamma^*}f\d\mu_\gamma=\int_{\gamma}f\d z\qquad (\textrm{for all }f\in C_0(\C)).
\]

\begin{thm}\label{Swissess}
Let $X$ be  a semiclassical Swiss cheese set. Then  $R(X)$ is essential.
\end{thm}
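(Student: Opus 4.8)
The plan is to apply Proposition~\ref{essential}: writing $A=R(X)$, it suffices to prove that the essential set $E(A)$ equals $X$, i.e.\ that every point of $X$ lies in the closure of the union of the supports of the annihilating measures for $A$. I would produce such measures in two ways, from the oriented boundary of $X$ and from small circles lying inside $X$.

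Write $\mathbf D=(\Delta,\mathcal D)$ with $X=\Delta\-\bigcup\mathcal D$, and let $\Gamma$ be the chain consisting of $\partial\Delta$ traversed positively together with $-\partial D$ for each $D\in\mathcal D$, so that $\Gamma$ is the positively oriented boundary of $X$. Because $\sum_{D\in\mathcal D}r(D)<\infty$, the total length of $\Gamma$ is at most $2\pi\left(r(\Delta)+\sum_{D\in\mathcal D}r(D)\right)<\infty$, so $\mu_\Gamma$ is a genuine finite measure supported on $\partial\Delta\cup\bigcup_{D\in\mathcal D}\partial D\subseteq X$ (one checks directly from the semiclassical hypotheses that each such circle lies in $X$). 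First I would show $\mu_\Gamma$ annihilates $R_0(X)$. Since every $f\in R_0(X)$ is a rational function whose poles avoid $X$, it is enough, by partial-fraction decomposition and linearity, to treat $f$ a polynomial and $f(z)=(z-p)^{-k}$ with $p\notin X$; for these the integrals over each individual circle are elementary, and in every case the contributions cancel to give $\int_\Gamma f\d z=0$. The key point is that the winding number of $\Gamma$ about each pole $p$ (which lies either outside $\Delta$ or inside one of the removed discs) is zero. By continuity $\mu_\Gamma$ then annihilates all of $R(X)=\overline{R_0(X)}$.

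Since $\mu_\Gamma$ restricts to a non-zero multiple of arclength on each circle, its support is exactly $\overline{\partial\Delta\cup\bigcup_{D\in\mathcal D}\partial D}$, so this whole set lies in $E(A)$. I would then observe that it already contains the topological boundary of $X$: if $x\in X\-\up{int}\,X$ then every neighbourhood of $x$ meets $\C\-X=(\C\-\Delta)\cup\bigcup\mathcal D$, and a short argument shows that $x$ then lies in $\partial\Delta$ or is a limit of points of $\bigcup_{D\in\mathcal D}\partial D$; either way $x\in\overline{\partial\Delta\cup\bigcup_{D\in\mathcal D}\partial D}\subseteq E(A)$. It remains to treat $x\in\up{int}\,X$. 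For such $x$, choose $\rho>0$ with $\overline{B(x,\rho)}\subseteq X$; then $\partial B(x,\rho)$ bounds a disc containing no pole of any $f\in R_0(X)$, so $\mu_{\partial B(x,\rho)}$ annihilates $R_0(X)$ and hence $R(X)$, and it is non-zero since $\int_{\partial B(x,\rho)}(z-x)^{-1}\d z=2\pi i\neq0$. Its support is the circle $\partial B(x,\rho)$, so letting $\rho\to0$ gives $x\in E(A)$. Combining the two cases yields $X=\up{int}\,X\cup(X\-\up{int}\,X)\subseteq E(A)$, whence $E(A)=X$ and $R(X)$ is essential by Proposition~\ref{essential}.

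I expect the main obstacle to be the rigorous handling of the infinite chain $\Gamma$: making precise that $\mu_\Gamma$ is a genuine finite regular Borel measure arising as a convergent sum over the (possibly infinitely many) removed discs, justifying the interchange of the integral with this sum, and confirming the support computation. The semiclassical hypothesis $\sum_{D\in\mathcal D}r(D)<\infty$ is exactly what is needed here, both for finiteness of the total length and for absolute convergence of $\sum_{D\in\mathcal D}\int_{-\partial D}f\d z$ (using that $f$ is bounded on $X$); the vanishing of the combined integral, by contrast, is an elementary winding-number computation once the partial-fraction decomposition has been made.
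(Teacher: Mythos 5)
Your proposal is correct and follows essentially the same route as the paper: the same annihilating measure $\mu_{\gamma_\Delta}+\sum_{D\in\mathcal D}\mu_{\gamma_D}$ supported on the boundary circles (convergent in $M(X)$ because $\sum_{D\in\mathcal D}r(D)<\infty$), the same small-circle argument at interior points, and the same appeal to Proposition \ref{essential}, including the technical points you flag (absolute convergence justifying the sum--integral interchange, and the support computation, which the paper settles via the countability of the tangency intersections of distinct circles). The only divergence is in how $\int f\,\mathrm{d}\mu=0$ is verified for $f\in R_0(X)$: you decompose $f$ by partial fractions and compute termwise winding numbers on each circle, whereas the paper chooses $N$ so that all poles of $f$ in $\Delta$ lie in $D_1\cup\dots\cup D_N$, applies Cauchy's theorem once to the finite cycle $\Gamma_N=\gamma_\Delta\dotplus\gamma_{D_1}\dotplus\dots\dotplus\gamma_{D_N}$, and lets $N\to\infty$ --- an equivalent winding-number argument.
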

\begin{proof}
Suppose first that $z\in\up{int}(X)$ and let $r>0$ be sufficiently small that $\overline{B(z,\gamma)}\subseteq X$. We define a path $\gamma:[\pi,\pi]\rightarrow \C$ by $\gamma_{z,r}(t)=z+re^{it}$. Then, by
Cauchy's theorem, $\mu_{\gamma_{z,r}}$ is an annihilating measure for $R(X)$ and so, by Proposition \ref{essential},  $z\in E(R(X))$. We
shall show that there exists an annihilating measure, $\mu$, for $R(X)$ with $\up{supp}(\mu)=\partial X$ and so, by
Proposition \ref{essential}, $\partial X\subseteq E(R(X))$. Hence
we will have shown that $X=E(R(X))$. Let $\mathbf D=(\Delta, \mathcal D)$ be a semiclassical Swiss
cheese such that $X=X_\mathbf D$. We let $\gamma_\Delta$ be the boundary circle of $\Delta$ given the positive orientation.
For $D\in\mathcal D$, let  $\gamma_D$ be the boundary circle of $D$
given the negative orientation. Obviously $\up{supp}\left(\mu_{\gamma_\Delta}\right)=\partial\Delta$, $\up{supp}\left(\mu_{\gamma_D}\right)=\partial D$ $(D\in\mathcal D)$ and these measures are non-atomic.

Now, 
for each $D\in\mathcal D$, $\norm{\mu_{\gamma_D}}\le 2\pi r(D)$. Hence
\[
\sum_{D\in\mathcal D}\norm{\mu_{\gamma_D}} \le 2\pi\sum_{D\in\mathcal D}r(D)<\infty,
\]
so
\[\mu:=\mu_{\gamma_\Delta}+\sum_{D\in\mathcal D}\mu_{\gamma_D}\]
defines a measure $\mu\in M(X)$.
Clearly, if $Y$ is a closed subset of $\up{int}(X)$ then $\mu(Y)=0$, so $\up{supp}(\mu)\subseteq \partial X$. To show the reverse inequality, first note that for, $D\in\mathcal D\cup\{\Delta\}$ and $z\in X$,
$\mu_{\gamma_D}(\{z\})=0$. Now let $Y$ be a closed subset of $\partial\Delta$. Then $Y\cap\bigcup_{D\in\mathcal D}\overline D$ is countable and so $\mu(Y\cap\bigcup_{D\in\mathcal D}\overline D)=0$ and $\mu_\Delta(Y\cap\bigcup_{D\in\mathcal D}\overline D)=0$. Hence 
$\mu(Y)=\mu_\Delta(Y)$. Similarly, if we let $D\in \mathcal D$ and $Y\subseteq\partial D$ then
$\mu(Y)=\mu_D(Y)$. Hence, for each point $z\in \partial\Delta\cup\bigcup_{D\in\mathcal D}\partial D$ and every neighbourhood $U$ of $z$ there is a set $Y\subseteq U$ with $\mu(Y)\ne 0$. Thus,
$z\in\up{supp}(\mu)$ and so  $\partial\Delta\cup\bigcup_{D\in\mathcal D}\partial D\subseteq\up{supp}(\mu)$, but
$\overline{\partial\Delta\cup\bigcup _{D\in\mathcal D}\partial D}=\partial X$, so we have $\partial X\subseteq \up{supp}(\mu)$. Thus
$\partial X= \up{supp}(\mu)$. It only remains to show that $\mu$ is an annihilating measure for $R(X)$.
To show this we let $f\in R_0(X)$; then $f$ is holomorphic on the open set
\[V:=\C\-\{z\in\C: z\textrm{ is a pole of } f\}.\]
We shall assume that $\mathcal D$ is infinite; the proof in the case where $\mathcal D$ is finite is similar (and easier). We let $\seq{D}{n}$ be a sequence enumerating $\mathcal D$ and pick $N\in\N$ such that those poles of
$f$ which are contained in $\Delta$ all lie in $D_1\cup\dots\cup D_N$. Then
\[\Gamma_N=\gamma_{\Delta}\dotplus\gamma_{D_1}\dotplus\dots\dotplus\gamma_{D_N}\]
is a cycle with $\mathrm{Ind}(\Gamma, z)=0$ for all $z\in\C\-V$. Hence, by Cauchy's theorem,
\[\int_{\Gamma_N} f\d z=0,\]
and letting $N$ tend to infinity yields
\[\int f\d \mu=0.\]
Hence $\mu$ is an annihilating measure for $R(X)$.
\end{proof}

The first known example of a non-trivial uniform algebra with no non-zero bounded point derivations was due to Wermer,
\cite{Wermer}. In fact he proved the following.
\begin{prop}\label{prop} Let $\Delta$ be a closed disc in $\mathbb{C}$, and let $\varepsilon>0$. Then there is a classical
Swiss cheese $\mathbf D=(\Delta,\mathcal D)$ such that
\[\sum_{D\in\mathcal D} r(D)<\eps,\]
and $R(X_{\mathbf D})$ has no non-zero bounded point derivations.
\end{prop}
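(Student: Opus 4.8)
The plan is to reduce the assertion to an extremal problem for derivatives of rational functions, and then to choose $\mathcal D$ by an infinite, scale-by-scale removal of many small discs. \emph{First, the reduction.} Fix $z_0\in X_{\mathbf D}$ and let $\psi$ be evaluation at $z_0$. If $d$ is a bounded point derivation at $\psi$ then, by induction, $d(z^n)=nz_0^{\,n-1}d(z)$, so $d(p)=d(z)\,p'(z_0)$ for every polynomial $p$; and for $f=p/q\in R_0(X_{\mathbf D})$ (where $q(z_0)\ne0$, since $z_0$ is not a pole of $f$) the Leibniz rule applied to $p=fq$ gives $d(f)=d(z)\,f'(z_0)$. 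Hence $d$ is determined by the scalar $d(z)$, and a \emph{nonzero} bounded point derivation exists at $z_0$ precisely when the functional $f\mapsto f'(z_0)$ is bounded on $(R_0(X_{\mathbf D}),\unorm{\cdot})$. It therefore suffices to construct $\mathcal D$ so that
\[
\sup\bigl\{\,\abs{f'(z_0)}: f\in R_0(X_{\mathbf D}),\ \unorm{f}\le1\,\bigr\}=\infty\qquad\text{for every }z_0\in X_{\mathbf D}.
\]

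\emph{The engine, and why it must be aggregated.} For $D\in\mathcal D$ with centre $a_D$ the function $g_D(z)=r(D)/(z-a_D)$ lies in $R_0(X_{\mathbf D})$, has $\unorm{g_D}\le1$ (because $\abs{z-a_D}\ge r(D)$ on $X_{\mathbf D}$), and satisfies $\abs{g_D'(z_0)}=r(D)/\abs{z_0-a_D}^2$. A single disc close to $z_0$ relative to its radius thus yields a large admissible derivative, but a surviving point sits in the gaps, and the derivative that the \emph{nearest} disc at a given scale can be guaranteed to produce is, up to an absolute constant, the contribution of that scale to $\sum_D r(D)$ — which must tend to $0$. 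The gain must come from combining all the discs near $z_0$. The correct invariant is the analytic-capacity criterion for bounded point derivations (Hallstr\"om, Melnikov): with $A_n(z_0)=\{z:2^{-n-1}\le\abs{z-z_0}<2^{-n}\}$ and $\gamma$ analytic capacity, a bounded point derivation at $z_0$ exists if and only if $\sum_n 2^{2n}\gamma(A_n(z_0)\-X_{\mathbf D})<\infty$. Because the cheese is classical the removed discs have disjoint closures, so in each annulus their aggregate capacity can be bounded below in terms of the radii of the discs it contains, tying the target back to the sum $\sum_D r(D)/\abs{z_0-a_D}^2$ built from the single-disc estimates. My goal is to make this capacity sum diverge at every $z_0\in X_{\mathbf D}$.

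\emph{The construction.} I would build $X_{\mathbf D}=\bigcap_n X_n$ with $X_0=\Delta$, passing from $X_{n-1}$ to $X_n$ by deleting a finite family of pairwise well-separated open discs, with closures inside $\up{int}\,\Delta\cap\up{int}\,X_{n-1}$, spread on a fine grid covering $X_{n-1}$. The grids are refined and the radii taken tiny, with stage-$n$ total radius at most $\eps 2^{-n}$, so $\sum_{D\in\mathcal D}r(D)<\eps$ and the cheese is classical. The key point is that about any surviving $z_0$ the discs of one fine stage fill the dyadic shells of $z_0$ roughly evenly, each shell contributing a comparable amount to $\sum_D r(D)/\abs{z_0-a_D}^2$; a fine enough grid therefore produces a large contribution at small cost in total radius (a logarithmic gain), and summing over stages forces $\sum_n 2^{2n}\gamma(A_n(z_0)\-X_{\mathbf D})=\infty$ at every point, which is exactly the required unbounded derivative functional.

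\emph{Main obstacle.} The crux is the apparent conflict between forcing divergence at \emph{every} one of the uncountably many surviving points and keeping $\sum_{D\in\mathcal D}r(D)<\eps$: a single disc per point and scale is provably too costly. It is resolved by using many discs of very small radius, so that it is the cheap capacity rather than the count that is spent, together with the shell-by-shell (logarithmic) aggregation above. Turning this into a rigorous argument — in particular bounding below the analytic capacity of the well-separated discs in each annulus, uniformly over all surviving points, and arranging the grid refinements so that none escapes being pierced at infinitely many scales — is where the real work lies.
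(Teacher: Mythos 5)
The paper itself offers no proof of this proposition: it is stated as Wermer's theorem and cited to \cite{Wermer}, so your attempt can only be measured against Wermer's original construction, whose general shape yours shares (remove grids of many tiny discs scale by scale, exploit a logarithmic aggregation gain, reduce everything to the derivative functional). Your opening reduction is correct and complete: $d(f)=d(z)f'(z_0)$ on $R_0(X_{\mathbf D})$, density then shows a non-zero bounded point derivation exists at $z_0$ exactly when $f\mapsto f'(z_0)$ is bounded on $(R_0(X_{\mathbf D}),\unorm{\cdot})$, so you need unboundedness at \emph{every} surviving point. The single-disc estimate $\unorm{g_D}\le 1$, $\abs{g_D'(z_0)}=r(D)/\abs{z_0-a_D}^2$ is also right, as is the diagnosis that only aggregation across scales can win.

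The gaps are in the engine. First, the capacity step rests, as stated, on a false principle: for pairwise disjoint, even well-separated discs, $\gamma\bigl(\bigcup_i D_i\bigr)$ is \emph{not} in general bounded below by a multiple of $\sum_i r_i$, since $\gamma(E)\le\diam(E)$; $N$ disjoint discs of radius $r$ strung along a segment of length $L$ have capacity $O(L)$ no matter how large $Nr$ is. The bound you want holds only in the sparse regime (total radius small relative to the scale of the configuration) and must be proved by exhibiting an admissible function, e.g.\ $h(z)=\sum_i\lambda_i r_i/(z-a_i)$ with $\abs{\lambda_i}=1$; in your grid this works precisely because $r_m/\delta_m^2$ is tiny. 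Second, once $h$ is written down the detour through the Hallstr\"om criterion is unnecessary, and the criterion masks a cancellation problem in your shell heuristic: the contributions $-r(D)/(z_0-a_D)^2$ have varying arguments, so the scalar sum $\sum_D r(D)/\abs{z_0-a_D}^2$ does not bound $\abs{f'(z_0)}$ below for $f=\sum_D g_D$; you must align phases by choosing $\lambda_D=\overline{(z_0-a_D)^2}/\abs{z_0-a_D}^2$ (the dependence on $z_0$ is harmless, since unboundedness is tested point by point), after which $\unorm{f}=O(1)$ while $\abs{f'(z_0)}\gtrsim(r_m/\delta_m^2)\log\bigl(r(\Delta)/\delta_m\bigr)$, unbounded in $m$ once $\delta_m\to 0$ fast enough. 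Third, and most seriously, the step you explicitly defer --- that no surviving point ``escapes being pierced at infinitely many scales'' --- is the actual core of the theorem and is not routine: disjointness of closures gives no quantitative lower bound on the free fraction of a dyadic shell about a surviving point (three earlier discs nearly tangent around $z_0$, or $z_0$ lying on the boundary circle of an earlier disc crowded by others, can leave only thin cusps at all relevant scales), so the refinement must enforce quantitative separation between stage-$m$ discs and all earlier closures at a scale tied to $\delta_m$, and one must then check that the $\log\bigl(r(\Delta)/\delta_m\bigr)$ shell count survives this censoring uniformly over $z_0\in X_{\mathbf D}$. As it stands, then, your text is a plausible programme --- essentially Wermer's --- rather than a proof: the reduction is solid, but the capacity claim is false in the generality asserted, and the per-point, per-scale bookkeeping under the classicality constraint is exactly the work the argument still owes.
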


The first known example of a non-trivial, normal uniform algebra was due to McKissick,
\cite{McKissick}. In fact he proved the following.
\begin{prop}\label{regex}
 For any closed disc $\Delta$ and any $\eps>0$, there is a Swiss cheese, $\mathbf D =(\Delta,\mathcal D)$, such that,
\[
 \sum_{D\in\mathcal D} r(D)<\eps,
\]
and $R(X_{\mathbf D})$ is normal.
\end{prop}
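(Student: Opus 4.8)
The plan is to establish the stronger property of \emph{regularity} (equivalent to normality, as noted above) by reducing it to a countable family of local separation problems and solving each one with a rational function whose poles are confined to removed discs of tiny total radius. First I would fix a countable base $\{U_n\}$ for the topology of $\Delta$ consisting of relatively open sets of small diameter, and observe that, since $R(X_{\mathbf D})$ is a closed subalgebra containing the constants, it suffices to produce, for every pair $(U_m,U_n)$ with $\overline{U_m}\cap\overline{U_n}=\emptyset$, a single function in $R(X_{\mathbf D})$ taking values near $0$ on $\overline{U_m}\cap X_{\mathbf D}$ and near $1$ on $\overline{U_n}\cap X_{\mathbf D}$. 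Standard function-algebra techniques (finite products of the form $1-\prod_j(1-f_j)$ to combine separators over a finite cover, followed by the holomorphic functional calculus in $R(X_{\mathbf D})$ to pass from approximate to exact values) then upgrade this uniform approximate separation into genuine normality. I would also fix a budget, assigning to the $k$-th separation task an allowance $\eps_k>0$ with $\sum_k\eps_k<\eps$, so that the discs removed to solve all the tasks have total radius below $\eps$.

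The analytic heart is a separation lemma of McKissick type: given an open disc $D$ and $\delta>0$, there are finitely many open discs lying in $D$ of total radius less than $\delta$, together with a \emph{uniformly bounded} rational function $f$ whose poles all lie in those discs, such that $f$ is within $\delta$ of $1$ on $D$ and within $\delta$ of $0$ off $D$. I would build $f$ from the elementary ingredients $\dfrac{1}{z-a}-\dfrac{1}{z-b}$, which are the derivatives of the branches of $\log\dfrac{z-a}{z-b}$ and are rational with poles at $a$ and $b$: chaining many such pairs along a circular arc produces a function realising a prescribed jump across the arc. To keep the function bounded I would pass through square roots (and iterate) of ratios $\dfrac{z-a}{z-b}$, whose branch cuts can be absorbed into thin chains of tiny discs, and by subdividing the arc finely I would drive the total radius of the removed discs below $\delta$ while retaining the jump. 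Because $f$ is rational with poles inside the removed discs, it restricts to an element of $R_0(X_{\mathbf D})\subseteq R(X_{\mathbf D})$.

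With the lemma in hand the assembly is routine. For the $k$-th task I enclose $\overline{U_m}$ (of small diameter) in an open disc $D$ disjoint from $\overline{U_n}$ and apply the lemma with $\delta=\eps_k$ to obtain a finite family $\mathcal D_k$ of open discs and a separating function that is near $1$ on $U_m$ and near $0$ on $U_n$. I then set $\mathcal D=\bigcup_k\mathcal D_k$, so that $\mathbf D=(\Delta,\mathcal D)$ is a Swiss cheese with $\sum_{D\in\mathcal D}r(D)\le\sum_k\eps_k<\eps$. Removing the additional discs only shrinks the set, so each separating function still lies in $R(X_{\mathbf D})$ and still separates its test pair on $X_{\mathbf D}$; the patching step of the first paragraph then yields full normality. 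Since the statement asks only for a Swiss cheese, and not a classical one, I need not arrange the removed discs to be pairwise disjoint, which removes a technical nuisance.

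The main obstacle is the separation lemma, and within it the simultaneous control of two competing quantities: the separating function must be \emph{uniformly bounded}, since an unbounded function gives no uniform separation and is useless in a uniform algebra, yet sharper separation naively demands larger or more numerous poles and hence more removed radius. McKissick's resolution, the iterated product/square-root construction that produces a bounded function with a definite jump while using discs of arbitrarily small total radius, is the genuinely delicate point, and I would expect the bulk of the effort to concentrate there. Secondary care is needed to confirm that the constructed functions really belong to $R(X_{\mathbf D})$ and that every approximation, together with the final patching, respects uniform convergence.
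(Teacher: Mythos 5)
Your overall architecture (a countable family of separation tasks with a summable radius budget, solved by a McKissick-type lemma, then patched into normality) is the right shape, and indeed the paper itself gives no proof here --- it cites McKissick \cite{McKissick} (and K\"orner's simplification \cite{Ko}). But the separation lemma at the heart of your sketch is false as you state it, and this is a genuine gap, not a technicality. If $f$ is rational with all poles in finitely many open discs of total radius $<\delta$ lying in $D$, then $f$ is continuous at every point of $\partial D$ not covered by those discs; since discs of total radius $<\delta$ can meet $\partial D$ in arcs of total length at most a constant times $\delta$, which is far less than $2\pi r(D)$ for small $\delta$, there are points $z_0\in\partial D\cap X_{\mathbf D}$ that are limits both of points of $X_{\mathbf D}\cap D$, where you require $\abs{f-1}<\delta$, and of points of $X_{\mathbf D}\setminus D$, where you require $\abs{f}<\delta$ --- a contradiction once $\delta<1/2$. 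The same length argument defeats the infinite-disc, uniform-limit version too: no function continuous on $X_{\mathbf D}$ can jump uniformly across $\partial D$ unless the removed discs essentially cover the circle, which costs total radius comparable to $r(D)$ and destroys your budget. This is exactly why McKissick's actual lemma is weaker than yours: it yields a sequence of open discs of total radius $<\delta$ and a uniform limit $F$ of rational functions with poles in those discs (so $F\in R(X_{\mathbf D})$, not $R_0(X_{\mathbf D})$) such that $F\equiv 0$ off $D$ while $F$ is \emph{nonvanishing, but not bounded away from zero}, at each point of $D$ off the discs; necessarily $F\to 0$ at $\partial D$ from inside, which your formulation forbids.

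With the corrected lemma the assembly must also change, since a single application never gives near-$0$/near-$1$ separation. The standard route: for $x\in X_{\mathbf D}$ and compact $E\subseteq X_{\mathbf D}\setminus\{x\}$, enclose $x$ in a small disc $D$ missing $E$ and obtain $F$ with $F(x)\ne 0$ and $F|_E=0$; this shows the hull-kernel topology coincides with the given topology, i.e.\ $R(X_{\mathbf D})$ is regular in the Banach-algebra sense, and the exact-value separation demanded by normality then follows from the general equivalence of regularity and normality that the paper quotes from \cite[Proposition 4.1.18]{Dales}. Your patching devices ($1-\prod_j(1-f_j)$, functional calculus) belong to that standard upgrade and are unobjectionable, but they cannot repair a false base lemma. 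Likewise, the delicate tension you correctly identify (uniform boundedness versus pole budget) is resolved in McKissick's and K\"orner's constructions not by making the function near $1$ inside $D$, but by an infinite product of rational factors converging off the discs to a bounded function that vanishes identically outside $D$ and is merely zero-free inside; your chains of logarithmic pairs and square roots along arcs do not obviously assemble into such a product, and as aimed at your (impossible) uniform jump they cannot.
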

This construction was
simplified somewhat by K\"{o}rner in \cite{Ko}. In \cite {O'Farrell} O'Farrell showed that in the above we could further insist that $0\in X_\mathbf D$ and that $R(X)$ have a bounded point
derivation of infinite order at $0$. The constructions of McKissick, K\"orner and O'Farrell appeared not to produce classical Swiss
cheese sets.
McKissick's result, along with what we would now call a system of Cole extensions (see, for example, \cite{Dawson}) was a crucial tool in Cole's (\cite{Cole}) solution to the famous ``peak point problem''.

The first author of this present paper has made use of Swiss cheese constructions to produce a variety of examples of plane sets $X$ such that the uniform algebra, $R(X)$ has interesting, specified properties. In \cite{FeinsteinStronglyRegular} he used McKissick's example, together with a system of Cole extensions to construct a non-trivial, strongly regular uniform algebra (see the paper for the definition). In \cite{FeinsteinTrivJen} the first author used a Swiss cheese  construction to obtain a compact plane set $X$ such that $R(X)$ has no non-trivial Jensen measures (see that paper for the definition) but is not normal. In \cite{FeinsteinMorris} the same author used a Swiss cheese construction to produce a counterexample to the conjecture (of Morris, in \cite{Morris}) that a uniform algebra with no non-zero, bounded point derivations would have to be weakly amenable (see \cite[Section 2.8]{Dales}). In \cite{me} the second author of the present paper showed that the uniform algebra produced could, in addition,  be normal, by using a Swiss-cheese-like method of removing discs from a compact 2-cell; more details  may be found in \cite{MeThesis}.
\subsubsection{An issue in the literature}\label{litprob}
Constructions using non-classical Swiss cheeses (including  the non-classical Swiss cheese constructions listed above) often rely upon the following result which appears on pages 28 and 29 of \cite{Bonsall}.
\begin{prop}\label{Bon}
 Let $\mathbf D=(\Delta, \{D_1, \dots, D_n\})$ be a finite Swiss cheese. Then $\partial X_{\mathbf D}$ consists of a
finite number of  arcs of the circles $\partial \Delta$ and $\partial D_i$. If we orient the arcs in
$\partial \Delta\cap\partial X_{\mathbf D}$ positively and those of each $\partial D_i\cap\partial X_{\mathbf D}$
negatively, this turns $\partial X_{\mathbf D}$ into a contour such that the following holds. If $D$ is an open neighbourhood of $X_{\mathbf D}$ and $f$ is analytic on $D$, then
\begin{enumerate}
 \item $\int_{\partial X_{\mathbf D}}f(z)\d z=0$;
\item $f(\zeta)=\frac{1}{2\pi i}
\int_{\partial X_{\mathbf D}}\frac{f(z)}{z-\zeta}\d z\qquad(\zeta\in\mathrm{int}(X_{\mathbf D})).$
\end{enumerate}
\end{prop}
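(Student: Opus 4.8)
The plan is to realise the oriented boundary as a $1$-chain $\Gamma$ and to reduce both assertions to the homological forms of Cauchy's theorem and the Cauchy integral formula (Chapter~10 of \cite{Rudin}). First I would record the underlying point-set topology. Writing $X=X_{\mathbf D}=\Delta\-\bigcup_{i=1}^n D_i$, one checks that $\partial X\subseteq\partial\Delta\cup\bigcup_{i=1}^n\partial D_i$: a boundary point lying in $\mathrm{int}(\Delta)$ and outside every $\overline{D_i}$ would have a whole neighbourhood inside $X$, hence be interior. Since any two of the finitely many circles meet in at most two points, these circles are cut into finitely many arcs, and $\partial X$ is the union of those arcs, each of which has $X$ on one side and $\C\-X$ on the other; this already gives the first assertion of the statement. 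Let $\Gamma$ be the $1$-chain formed by these arcs with $\partial\Delta$ oriented positively and each $\partial D_i$ negatively. The point of this orientation is that along \emph{every} arc of $\Gamma$ the set $X$ lies \emph{on the left}: for the anticlockwise arcs of $\partial\Delta$ the interior of $\Delta$ is on the left, while for the clockwise arcs of $\partial D_i$ the exterior of $D_i$ is on the left.

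Granting for the moment that (A)~$\Gamma$ is a cycle and (B)~$\Ind(\Gamma,z)=1$ for $z\in\mathrm{int}(X)$ and $\Ind(\Gamma,z)=0$ for $z\notin X$, both conclusions follow at once. Let $f$ be analytic on an open set $D\supseteq X$. Then $\Gamma^*=\partial X\subseteq X\subseteq D$, and for every $z\in\C\-D$ we have $z\notin X$, so $\Ind(\Gamma,z)=0$ by~(B). The homological Cauchy theorem then yields $\int_{\partial X}f(z)\d z=\int_\Gamma f\d z=0$, which is~(1); and for $\zeta\in\mathrm{int}(X)$ the homological Cauchy integral formula gives $\Ind(\Gamma,\zeta)f(\zeta)=\frac1{2\pi i}\int_\Gamma\frac{f(z)}{z-\zeta}\d z$, which is~(2) since $\Ind(\Gamma,\zeta)=1$.

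It remains to prove (A) and (B), and this is where the real work lies; both reduce to a local analysis at the finitely many ``vertices'' where two circles meet. Near such a point $p$ the two circles look like two transversal lines, cutting a small disc about $p$ into finitely many open sectors, and $X$ is a union of some of them; the arcs of $\partial X$ issuing from $p$ are exactly the sector edges separating an $X$-sector from a non-$X$-sector. Travelling once around $p$, the transitions from $X$ to $\C\-X$ match the transitions from $\C\-X$ to $X$, and with the ``$X$ on the left'' convention this says precisely that the incoming and outgoing arc-ends at $p$ balance, so $\Gamma$ has no boundary and is a cycle, giving~(A). For~(B) I would argue by ray-crossing: fix $z_0\notin\Gamma^*$ and choose a ray from $z_0$ to $\infty$ meeting no vertex, no tangency and no arc endpoint, and crossing each arc transversally (the excluded directions form a finite set). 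As the base point moves outward, $\Ind(\Gamma,\cdot)$ jumps by $+1$ each time it enters $X$ and by $-1$ each time it leaves $X$, by the left-hand rule together with the model case of a single circle. Since $\Ind(\Gamma,\cdot)=0$ near $\infty$, the jumps sum to $-\Ind(\Gamma,z_0)$; as one starts inside and ends outside when $z_0\in\mathrm{int}(X)$, and outside at both ends when $z_0\notin X$, this forces $\Ind(\Gamma,z_0)=1$ and $\Ind(\Gamma,z_0)=0$ respectively.

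The main obstacle is exactly the bookkeeping in (A) and (B): one must verify that the prescribed orientation really does place $X$ uniformly on the left along every boundary arc, even where discs overlap or protrude from $\Delta$, and one must dispose of the degenerate configurations---tangential intersections, triple points, and arcs meeting $\partial\Delta$ at a single point---which, being finite in number, neither contribute to the integrals nor disturb the transversal ray-crossing count. Once the local sector picture at each vertex is pinned down, the cyclic-transition argument and the ray-crossing argument deliver (A) and (B), and the proposition follows from the reduction above.
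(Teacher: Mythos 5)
You should know at the outset that the paper contains \emph{no proof of this statement to compare against}: Proposition~\ref{Bon} is quoted from pages 28--29 of \cite{Bonsall}, and the authors state explicitly that the proof given there ``is a sketch, which appears somewhat difficult to make rigorous'' and that they know of no other proof in print. Indeed the surrounding section exists precisely to flag this as a problem in the literature, and Theorem~\ref{classical} is offered as a way of \emph{avoiding} any reliance on Proposition~\ref{Bon}. Your proposal follows the same strategy as the Bonsall--Duncan sketch: decompose $\partial X_{\mathbf D}$ into oriented circular arcs, verify that the resulting chain $\Gamma$ is a cycle with $\Ind(\Gamma,\cdot)=1$ on $\mathrm{int}(X_{\mathbf D})$ and $0$ off $X_{\mathbf D}$, and then invoke the homological Cauchy theorem and integral formula from Chapter~10 of \cite{Rudin}. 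The reduction of (1) and (2) to your claims (A) and (B) is correct, as is the generic local observation that away from the finitely many circle intersections the prescribed orientation puts $X_{\mathbf D}$ on the left of each arc.

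The genuine gap is that (A) and (B) --- which you rightly call ``the real work'' --- are asserted rather than proved, and the local model on which you rest them fails in exactly the configurations that make this result delicate. A finite Swiss cheese in this paper's sense places \emph{no} constraints on the discs: two boundary circles may coincide (e.g.\ $D_i=D_j$, or $\overline{D_i}$ sharing its boundary circle with $\Delta$), circles may be internally or externally tangent, and an ``arc'' of $\partial X_{\mathbf D}$ may degenerate to a single point or to a full circle. Near a tangency the two circles do \emph{not} ``look like two transversal lines'', so the sector decomposition and the transition-counting argument for (A) break down there; and since being a cycle is a chain-level condition on matching endpoints, you cannot ``dispose of'' such vertices on the grounds that finitely many points do not affect integrals --- either the arc-ends pair up at a tangency or $\Gamma$ has boundary, and deciding which requires a separate non-transversal local analysis that you have not supplied. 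Similarly, the ``left-hand rule'' you use for (B) --- that $\Ind(\Gamma,\cdot)$ jumps by exactly $\pm 1$ upon a transversal crossing of a single boundary arc --- is a standard-sounding fact that is nevertheless not in Chapter~10 of \cite{Rudin} and needs its own proof or citation, again with care at points where several arcs of $\Gamma$ lie close together. In short, your writeup is an honest and well-organised expansion of the known sketch, but the steps you defer to ``bookkeeping'' are precisely the steps the authors of this paper judged difficult enough that they built the allocation-map machinery of Theorem~\ref{classical} to sidestep the proposition altogether; to count as a proof, the vertex analysis (including tangencies and coincident circles) and the index-jump lemma would have to be carried out in full.
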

However, the proof of this given in \cite{Bonsall} is a sketch, which appears somewhat difficult to make rigorous, and
we are not aware of any other proof of the result in print. This being the case, it may be helpful to have other methods available that do not depend on this result. Theorem
\ref{classical} of the current paper will provide an alternative means of proving the non-triviality of $R(X)$ for a large class of
non-classical Swiss cheese sets $X$.

We shall show that in many cases of Swiss cheeses $\mathbf D$ constructed so that $R(X_\mathbf D)$ has particular properties, we may assume that $\mathbf D$ is classical without losing the relevant properties. In the following subsection we show that this will mean that we have natural, essential uniform alebras with a variety of specified properties on a fixed compact space, namely the \Sier carpet.
\subsection{The \Sier carpet.}
The \Sier carpet is a well known fractal, which has been widely studied in topology, the theory of dynamical systems and complex analysis (see \cite{Bonk}). It is defined as follows.
We let $Q$ be the compact 2-cell (rectangle)  with corners at $0,1,i$ and $1+i$ and, for $z\in \C$ and $l\in \lopen 0,\infty\ropen$, we define $U(z,l)$ to be the open 2-cell with corners at $z, z+l, z+li$ and $z+l+li$. The \emph{\Sier carpet}, $S$,  is  the set,
\[
S=Q\-\bigcup_{k\in\N, \,m,n\in\left\{0,\dots3^{k-1}\right\}}U\left(3^{-k}((3m+1)+(3n+1)i),3^{-k}\right).
\]
Figure \ref{carp} shows an approximation of the \Sier carpet.
\begin{figure}
\begin{center}

\includegraphics[width=7cm]{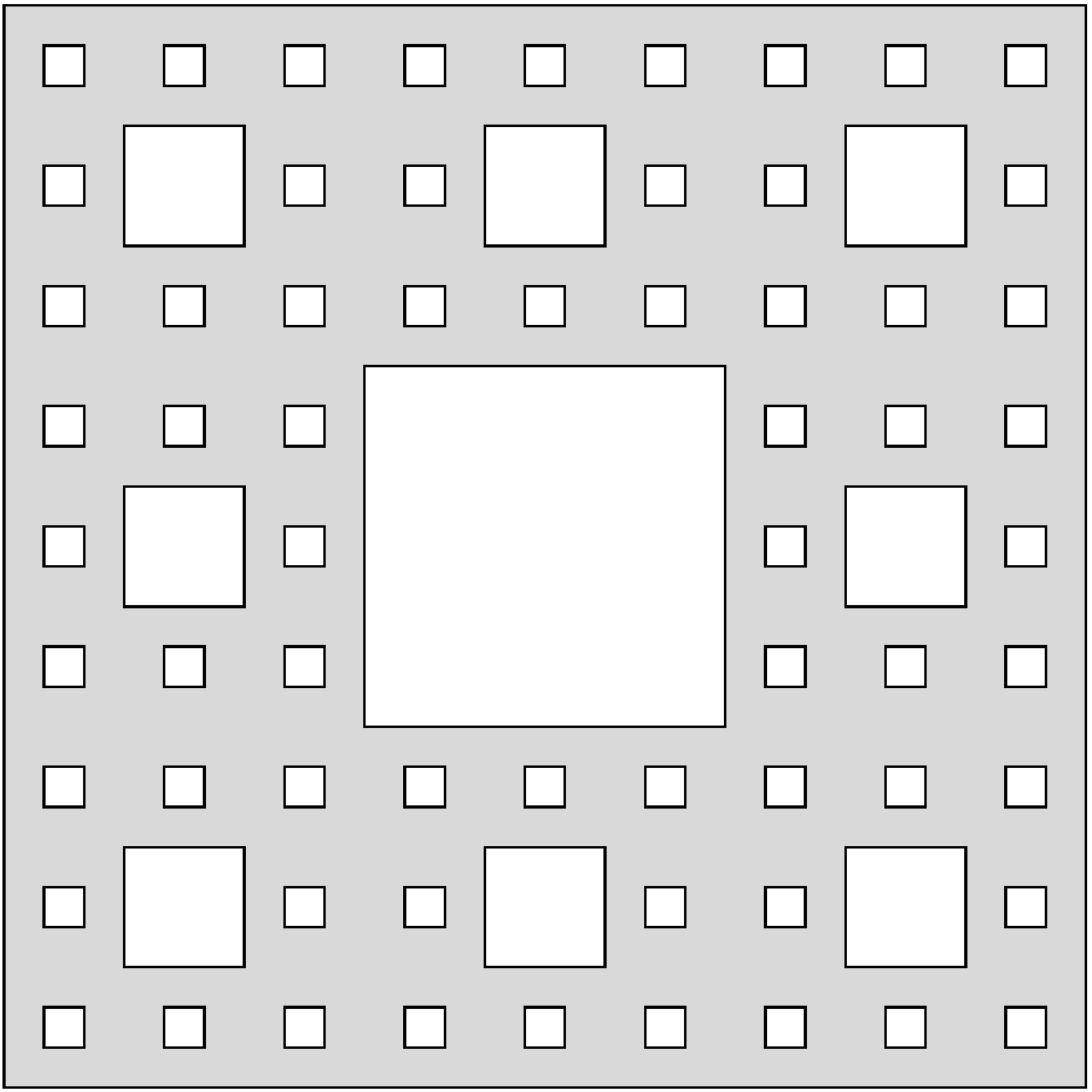}

\caption{The \Sier carpet.}
\label{carp}
\end{center}
\end{figure}

In this paper we consider how Swiss cheeses relate to plane homeomorphs of the \Sier carpet. Our first examples come as consequences of the following
result of Whyburn (\cite{Whyburn}), which may be found as \cite[Theorem 7.2]{Bonk}.
\begin{prop}\label{Sier}
Let $\Delta=\{z\in\C:\abs{z}\le 1\}$, let $\seq{D}{i}$ be a sequence
of pairwise disjoint Jordan domains whose closures lie in the interior of $\Delta$, and let
\[X:= \Delta\setminus\bigcup_{i\in\N} D_i.\] Then $X$ is homeomorphic to the
{S}ierpi\'nski carpet if and only if $X$ has empty interior, $\partial D_i \cap\partial D_j = \emptyset$ for $i \ne j$,
and $\diam(D_i) \rightarrow 0$ as $i \rightarrow \infty$.
\end{prop}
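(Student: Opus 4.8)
The plan is to deduce both implications from Whyburn's intrinsic topological characterisation of the \Sier carpet (from \cite{Whyburn}), which asserts that a metrisable space is homeomorphic to the carpet if and only if it is a one-dimensional, locally connected continuum that embeds in the plane and has no local cut points. I would regard $X$ as a subset of the sphere $S^2 = \C \cup \{\infty\}$; then $X$ is automatically planar, and the complementary domains of $X$ in $S^2$ are exactly the open ``outer'' Jordan domain $\Delta^* := S^2 \setminus \Delta$ (bounded by $\partial\Delta$ and containing $\infty$) together with the $D_i$, since these are pairwise disjoint open \emph{connected} sets whose union is $S^2\setminus X$, so each is clopen in the complement and hence is a single connected component. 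The entire argument is then the translation between the three conditions in the statement and the intrinsic conditions of Whyburn's theorem.

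For the ``if'' direction I would verify Whyburn's conditions in turn. Compactness is clear, and $\dim X = 1$ follows from the empty-interior hypothesis (which gives $\dim X \le 1$, since $X$ contains no open subset of $\C$) together with the presence of genuine arcs in $X$, for instance in $\partial\Delta$ (which gives $\dim X \ge 1$). Connectedness holds because $\Delta$ minus a family of interior topological discs with disjoint closures remains connected. The two conditions that carry the real weight are local connectedness and the absence of local cut points, and it is precisely here that the hypotheses $\diam D_i \to 0$ and $\partial D_i \cap \partial D_j = \emptyset$ are used: near any $p \in X$ only finitely many of the $D_i$ have diameter exceeding a given threshold, so one can find arbitrarily small connected neighbourhoods of $p$ and can route paths around $p$ through the region separating the (small, mutually non-touching) holes, showing that no small punctured neighbourhood of $p$ is disconnected. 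Whyburn's theorem then yields that $X$ is homeomorphic to the carpet.

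The ``only if'' direction runs the same dictionary backwards. If $X$ is homeomorphic to the \Sier carpet then, being one-dimensional, it contains no open subset of $\C$ and so has empty interior; local connectedness forces the diameters of the complementary domains $D_i$ to tend to zero; and a point common to two boundary curves $\partial D_i$ and $\partial D_j$ would be a local cut point, which the carpet does not possess, so $\partial D_i \cap \partial D_j = \emptyset$. I expect the main obstacle to be the careful verification of the no-local-cut-point condition (in both directions) and of local connectedness, since these are the points at which the geometric hypotheses must be converted into genuinely topological statements; the deepest input, namely that these intrinsic properties pin down the carpet up to homeomorphism, is exactly Whyburn's theorem, which I would invoke rather than reprove.
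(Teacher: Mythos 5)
First, note what the paper itself does with this statement: it offers no proof at all. Proposition \ref{Sier} is quoted as a known result of Whyburn \cite{Whyburn}, in the form in which it appears as Theorem 7.2 of \cite{Bonk}, so there is no internal argument to compare yours against; the ``paper's proof'' is a citation. Your strategy --- deducing the statement from Whyburn's intrinsic characterisation of the carpet (planar, one-dimensional, locally connected, nondegenerate continuum with no local cut points) --- is a legitimate and standard way to derive exactly this proposition, and your bookkeeping of the complementary domains of $X$ in $S^2$ and the translation of ``empty interior'' into ``$\dim X=1$'' (via the Hurewicz--Wallman fact already quoted in the paper) are correct. In that sense the outline is sound.

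However, as a proof the proposal has genuine gaps, and they sit precisely at the two places you yourself flag. For local connectedness in the ``if'' direction, ``one can find arbitrarily small connected neighbourhoods'' is not an argument: the $D_i$ are arbitrary Jordan domains, possibly with nowhere-differentiable boundaries, and infinitely many of them may accumulate at the point $p$; what is actually needed is the classical criterion (due to Whyburn/Moore) that a continuum in $S^2$ is locally connected if and only if its complementary domains form a null sequence and each has locally connected boundary --- which your hypotheses ($\diam(D_i)\to 0$, Jordan boundaries) satisfy, so this gap is fixable by an explicit citation. The local-cut-point claims are more serious. In the ``if'' direction, ``routing paths around $p$ through the region separating the holes'' is exactly the hard planar-topology content of Whyburn's theorem (one needs tools such as Janiszewski's theorem to produce such connections rigorously when holes accumulate at $p$), and in the ``only if'' direction the assertion that a common point of $\partial D_i$ and $\partial D_j$ ``would be a local cut point'' is plausible but unproven: the two boundaries may intersect in a complicated closed set, and the standard rigorous route is instead Whyburn's intrinsic characterisation of peripheral circles (a simple closed curve in a carpet bounds a complementary domain if and only if it does not separate the carpet), which makes the family $\{\partial D_i\}$ a topological invariant and yields disjointness from the model carpet. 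Note also that Whyburn's original formulation --- a planar, one-dimensional, locally connected continuum is an $S$-curve if and only if the boundaries of its complementary domains are disjoint simple closed curves --- matches the proposition more directly than the no-local-cut-points form you chose, and would let you bypass the local-cut-point verification in the ``if'' direction entirely. As written, your proposal is a correct strategy whose unproven steps are essentially the substance of the theorem being invoked, so in effect it re-states Whyburn's result rather than proving it; since the paper itself merely cites Whyburn, that is defensible, but then the honest move is to cite these intermediate facts explicitly rather than sketch them.
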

This gives us the following corollary.
\begin{cor}\label{Sierc}
 Let $X$ be a classical Swiss cheese set with empty interior in $\C$. Then $X$ is homeomorphic to the \Sier carpet.
\end{cor}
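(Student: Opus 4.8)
The plan is to deduce the corollary directly from Whyburn's characterisation, Proposition \ref{Sier}, by checking that a classical Swiss cheese set with empty interior satisfies all of its hypotheses. Since being homeomorphic to the \Sier carpet is a topological property, I would first normalise the ambient disc. Writing $\Delta=\overline{B(c,R)}$ and applying the affine homeomorphism $z\mapsto(z-c)/R$ of $\C$, which carries $\Delta$ to the closed unit disc and sends each removed open disc of radius $r(D)$ to one of radius $r(D)/R$, I obtain a classical Swiss cheese on the unit disc whose associated Swiss cheese set still has empty interior (interiors, disjointness of closures, and containment of closures in the open disc are all preserved by a homeomorphism, and summability of the radii is preserved since they are merely scaled by $1/R$). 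Thus I may assume $\Delta=\{z\in\C:\abs z\le1\}$.

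Next I would arrange the removed discs into a sequence, as the statement of Proposition \ref{Sier} requires. I would first discard any degenerate (empty) discs, since they do not affect $X_{\mathbf D}$; the remaining discs are genuine open discs, hence Jordan domains. The key observation is that $\mathcal D$ must contain infinitely many nondegenerate discs: if only finitely many discs $D_1,\dots,D_n$ were removed, then, as their closures are pairwise disjoint and contained in $\up{int}(\Delta)$, the open set $\up{int}(\Delta)\-\bigcup_{j=1}^n\overline{D_j}$ would be nonempty and contained in $X_{\mathbf D}$, contradicting the hypothesis that $X_{\mathbf D}$ has empty interior. Hence the nondegenerate discs form a countably infinite family, which I enumerate as $\seq{D}{i}$.

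It then remains to verify the three conditions in Proposition \ref{Sier}. Emptiness of the interior is exactly our hypothesis. The classical condition states that the closures $\overline{D_i}$ intersect neither one another nor $\C\-\up{int}(\Delta)$: the former gives that the $D_i$ are pairwise disjoint with $\partial D_i\cap\partial D_j=\emptyset$ for $i\ne j$ (since $\partial D_i\subseteq\overline{D_i}$), while the latter gives $\overline{D_i}\subseteq\up{int}(\Delta)$, so that the standing hypotheses of Proposition \ref{Sier} also hold. Finally, since $\sum_{i} r(D_i)<\infty$, the radii tend to $0$ under any enumeration, whence $\diam(D_i)=2r(D_i)\to0$. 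All the hypotheses of Proposition \ref{Sier} now hold, and it follows that $X_{\mathbf D}$ is homeomorphic to the \Sier carpet.

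I expect no deep obstacle here: the mathematical content of the corollary lies entirely in the cited theorem of Whyburn, and the work is the bookkeeping of matching the definition of a classical Swiss cheese to his hypotheses. The two points requiring genuine care are the reduction to the unit disc, so that Proposition \ref{Sier} applies as stated, and the verification that the family of removed discs is infinite — the latter being precisely where the empty-interior hypothesis is essential and where the finite (and degenerate) cases must be explicitly excluded.
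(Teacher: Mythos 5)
Your proof is correct and follows exactly the route the paper intends: the paper offers no written proof at all, presenting the corollary as an immediate consequence of Whyburn's characterisation (Proposition \ref{Sier}), and your argument simply supplies the routine verifications (affine normalisation to the unit disc, discarding degenerate discs, infinitude of the family from the empty-interior hypothesis, and $\diam(D_i)\to 0$ from $\sum_i r(D_i)<\infty$). All of these checks are sound, so your write-up is a correct, fleshed-out version of the paper's one-line deduction.
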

Thus, for any classical Swiss cheese set $X$ with empty interior we may consider $R(X)$ to be a uniform algebra on the \Sier
carpet. Each of these algebras is natural and, by Proposition
\ref{Swissess}, each is essential. We now consider how a well known topological property of the \Sier carpet relates to uniform algebras.
\begin{dfn}
Let $T$ be a non-empty topological space and $\mathcal U\in \mathcal P(\mathcal P(T))$ be an open cover of $T$. We say an open cover $\mathcal V$ is a
\emph{refinement} of $\mathcal U$ if, for each $V\in\mathcal V$, there exists $U\in\mathcal U$ with $V\subseteq U$. We define the
\emph{topological dimension} of $T$ to be the smallest non-negative integer $n$ (if it exists) such that every open cover  of $T$ has
a refinement $\mathcal V$ such that each $x\in T$ is in at most $n+1$ elements of $\mathcal V$. If no such integer exists, then we say the toplogical dimension is infinite.
\end{dfn}
For subsets of $\R^n$ the following, which is \cite[Theorem IV 3]{Hurewicz} holds.
\begin{prop}
 A subset $X$ of $\R^n$ has topological dimension strictly less than $n$ if and only if $X$ has empty interior in $\R^n$.
\end{prop}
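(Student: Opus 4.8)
The plan is to read ``topological dimension'' as the Lebesgue covering dimension of the definition just given, and to deduce the equivalence from two standard facts of classical dimension theory, which I would quote rather than reprove. First: for subspaces of separable metric spaces (and every subset of $\R^n$ is such) the covering dimension is monotone under passage to subspaces and agrees with the small and large inductive dimensions. Second: Lebesgue's covering theorem, which for a closed $n$-cube $C\cong[0,1]^n$ gives $\dim C=n$ (the real content being the lower bound $\dim C\ge n$) and which, at the level of its proof, supplies coverings of $\R^n$ by arbitrarily small closed ``bricks'' of multiplicity at most $n+1$.

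The implication ``$\dim X<n\Rightarrow X$ has empty interior'' I would prove by contraposition. If $X$ has non-empty interior then it contains a closed $n$-cube $C$, so by monotonicity $\dim X\ge\dim C=n$, contradicting $\dim X<n$. This direction is routine once Lebesgue's theorem is granted.

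The reverse implication ``empty interior $\Rightarrow\dim X\le n-1$'' is the heart of the matter. Given a finite open cover of $X$ with Lebesgue number $\lambda$, I would tile $\R^n$ by a brick decomposition of mesh less than $\lambda$ and multiplicity at most $n+1$, arranged (by the staggering used in the proof of Lebesgue's theorem) so that the set $B$ of points lying in exactly $n+1$ bricks is countable; indeed one may arrange it to be discrete. The traces on $X$ of the bricks, suitably enlarged to open sets without increasing the multiplicity, refine the given cover, and a point of $X$ lies in $n+1$ of them only if it lies in $B$. It therefore suffices to translate the whole pattern by a small vector $v$ so that $(B+v)\cap X=\emptyset$; the trace cover then has each point of $X$ in at most $n$ of its members, whence $\dim X\le n-1$ by the definition.

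The main obstacle is precisely the existence of such a translate, and this is where the hypothesis is used. When $X$ is closed with empty interior it is \emph{nowhere dense}; the set of ``bad'' translates is $X-B=\bigcup_{b\in B}(X-b)$, a countable union of nowhere-dense closed sets, hence meager, so a good $v$ exists by the Baire category theorem. For the applications in this paper $X$ is compact, so this case is all that is needed. For a general, possibly dense, subset $X$ with empty interior the elementary translation argument genuinely fails: closedness is used essentially, and a dense $X$ need not admit any translate of $B$ in its complement. In that generality I would instead invoke the full structure theory (the countable sum and decomposition theorems) underlying the classical theorem of Hurewicz cited above.
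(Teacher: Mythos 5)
The paper gives no proof of this proposition at all: it is quoted directly as Theorem IV 3 of Hurewicz and Wallman \cite{Hurewicz}. So your attempt is necessarily a different route from the paper's, and its overall architecture is the classical one lying behind that citation. The easy direction by contraposition (a set with interior contains a closed cube, and monotonicity of dimension over subsets of a separable metric space plus $\dim[0,1]^n=n$ from Lebesgue's covering theorem give $\dim X\ge n$) is fine. For the converse your main line is also essentially right in the case you restrict to: for compact $X$ the reduction to finite covers and the existence of a Lebesgue number are legitimate, the staggered brick decomposition of multiplicity $n+1$ whose multiplicity-$(n+1)$ locus $B$ is discrete is correct (and is how one proves $\dim\R^n\le n$), and the Baire argument for a good translate --- each $X-b$ is closed and nowhere dense when $X$ is closed with empty interior, so $\bigcup_{b\in B}(X-b)$ is meager --- is exactly the right use of the hypothesis. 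Your closing caveat is accurate too: for a dense $X$ with empty interior (e.g.\ the complement of a countable dense set) no translate of $B$ need avoid $X$, and deferring that case to the sum and decomposition theorems of \cite{Hurewicz} is consistent with, indeed identical to, what the paper itself does for the whole statement.

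There is, however, one step that fails as literally written: you fatten the bricks to open sets \emph{before} translating, claiming that a point of $X$ lies in $n+1$ of the enlargements only if it lies in $B$. But an intersection of $n+1$ open enlargements is open, so if nonempty it is a whole neighbourhood of the corner and cannot be contained in the countable set $B$; the multiplicity-$(n+1)$ locus of the fattened family is an open neighbourhood $N$ of $B$. Translating afterwards is then no longer saved by Baire: the bad set of translates is $X-N$, a union of \emph{open} sets, and it can be all of $\R^n$ --- note that a Swiss cheese set may have positive area and may meet every coset of the period lattice of the pattern, so ``meager'' is unavailable. The repair is to transpose the two steps: translate the \emph{closed} bricks first, exactly by your Baire argument, so that $(B+v)\cap X=\emptyset$; then, since $X$ is compact and $B+v$ is closed and discrete, $d:=\dist\left(X,B+v\right)>0$, and by periodicity and compactness there is $\eps>0$ such that every point at distance at least $d$ from $B+v$ meets the $\eps$-neighbourhoods of at most $n$ bricks. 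The $\eps$-enlargements then have multiplicity at most $n$ at every point of $X$ (which is all the definition of covering dimension requires --- multiplicity away from $X$ is irrelevant), and the rest of your argument goes through. With that reordering your proof of the compact case is correct.
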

\begin{dfn}
 A compact plane set $X$ is a \emph{universal plane curve} if it has topological dimension $1$, and whenever $Y$ is a compact plane set with
topological dimension less than or equal to $1$, then there is a subset $Y'$ of $X$ which is homeomorphic to $Y$.
\end{dfn}
The following was proven by \Sier in \cite{Sier} (see also \cite[p.433]{BlumMeng}).

\begin{prop}\label{topdim}
The \Sier carpet is a universal plane curve.
\end{prop}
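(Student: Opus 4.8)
The plan is to verify the two defining properties of a universal plane curve for $S$ separately: that $S$ has topological dimension $1$, and that every compact plane set of topological dimension at most $1$ embeds homeomorphically in $S$. For the dimension, I would first observe that $S$ has empty interior in $\C$: at each scale $3^{-k}$ there is a removed square within distance $O(3^{-k})$ of any given point of $Q$, so the union of the removed squares is dense in $Q$ and hence meets every disc. By the characterisation of topological dimension for subsets of $\R^n$ quoted above (with $n=2$), empty interior gives $\dim S\le 1$. On the other hand the segment $[0,1]\subseteq\C$ lies in $S$, since every removed square has its lower edge at height at least $3^{-k}>0$; as $\dim[0,1]=1$ and topological dimension does not increase on passing to subspaces (\cite{Hurewicz}), we get $\dim S\ge 1$, and therefore $\dim S=1$.

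For universality, let $Y$ be a compact plane set with $\dim Y\le1$. By the same characterisation $Y$ has empty interior, and after an affine change of coordinates I may assume $Y\subseteq\up{int}(Q)$. Write $\mathcal D_k$ for the finite family of open squares removed at level $k$, so that the squares in $\mathcal D_k$ are pairwise disjoint of side $3^{-k}$ and, crucially, \emph{all} the removed squares have pairwise disjoint closures, with a gap of size at least $3^{-(k+1)}$ separating a level-$k$ square from every square of any other level. The key geometric step is a \emph{pushing lemma}: if $K\subseteq\C$ has empty interior and $U$ is one of these open squares with a slightly larger concentric open square $U'$, then there is a homeomorphism $g$ of $\C$, equal to the identity off $U'$, with $g(K)\cap U=\emptyset$ and $\unorm{g-\id}\le\diam(U')$. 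This is possible precisely because $K$ is nowhere dense: one chooses a small disc $B\subseteq U\setminus K$ and lets $g$ be an ``inflation'' that expands $B$ until its image contains $\overline U$ while remaining the identity outside $U'$; then $g^{-1}(U)\subseteq B$, so every point of $K$, lying outside $B$, is carried out of $U$.

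I would now iterate level by level. Since the squares in $\mathcal D_k$ have pairwise disjoint closures, I may apply the pushing lemma simultaneously in disjoint neighbourhoods $U'$ chosen small enough to avoid the closure of every removed square of every other level; this yields a single homeomorphism $g_k$ of $\C$ with $g_k(Y_{k-1})\cap\bigcup\mathcal D_k=\emptyset$ and $\unorm{g_k-\id}\le C\,3^{-k}$, where $Y_0:=Y$ and $Y_k:=g_k(Y_{k-1})$. Setting $h_n:=g_n\circ\dots\circ g_1$, the bound $\sum_k 3^{-k}<\infty$ makes $(h_n)$ uniformly Cauchy with uniform limit $h$. Because each later $g_j$ is supported off $\overline{U}$ for every removed square $U$ of an earlier level, once a point leaves $U$ it stays in the closed set $\C\setminus U$; hence $h(Y)\subseteq\C\setminus U$ for each removed $U$, and since the perturbations are small enough to keep $Y$ inside $Q$, we obtain $h(Y)\subseteq Q\setminus\bigcup_k\bigcup\mathcal D_k=S$.

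The remaining, and main, obstacle is to show that $h|_Y$ is injective, for a uniform limit of homeomorphisms need not be injective. To force injectivity I would impose the inductive convergence criterion for limits of homeomorphisms: at stage $k$ I shrink both the support neighbourhoods $U'$ and the displacement $\unorm{g_k-\id}$ to be small compared with the modulus of injectivity already achieved by $h_{k-1}$ on the compact set $Y$, equivalently controlling $\unorm{h_n^{-1}-h_{n-1}^{-1}}$ so that $(h_n^{-1})$ also converges uniformly. This makes $h$ a homeomorphism of $\C$ onto itself, whence $h|_Y$ is the desired embedding of $Y$ into $S$, and $S$ is a universal plane curve. This last convergence argument is the delicate part; it is exactly the content of the classical theorem of \Sier \cite{Sier}, which may of course be invoked directly.
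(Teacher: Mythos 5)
The paper does not actually prove this proposition: it is quoted from the literature, with the proof attributed to \Sier \cite{Sier} (see also \cite[p.433]{BlumMeng}). So your sketch must be judged as a self-contained argument, and there it has genuine gaps, concentrated in the universality half. (The dimension half is fine: empty interior plus the quoted Hurewicz--Wallman characterisation gives $\dim S\le 1$, and the bottom edge $[0,1]\subseteq S$ gives $\dim S\ge 1$.) The first gap is geometric: your claim of ``a gap of size at least $3^{-(k+1)}$ separating a level-$k$ square from every square of any other level'' is false in one direction, and with it the requirement that the supports $U'$ ``avoid the closure of every removed square of every other level'' is impossible to satisfy. Removed squares of level $j>k$ come within $3^{-j}$ of the closure of a level-$k$ square; indeed, since the union of removed squares is dense in $Q$ and $\partial U\subseteq S$ for each removed square $U$, the deeper squares accumulate on $\partial U$, so \emph{no} neighbourhood of $\overline U$ avoids all other levels. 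What is true, and what your argument actually needs, is the one-sided estimate: a level-$j$ square is at distance at least $3^{-j}$ from every hole of level strictly less than $j$, so at stage $j$ one can choose supports avoiding all \emph{earlier} holes. With that correction the containment $h(Y)\subseteq S$ goes through as you describe.

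The serious gap is injectivity, which you correctly identify as the crux but do not close. The displacement at stage $k$ cannot be shrunk: a point of the current image lying near the centre of a level-$k$ hole must be moved a distance comparable to $3^{-k}$, and the stage at which a given level is cleared cannot be postponed to make its squares smaller. So the scheme ``shrink $\unorm{g_k-\id}$ to be small compared with the modulus of injectivity already achieved by $h_{k-1}$'' is not implementable: that modulus depends on $h_{k-1}$ and $Y$ and may demand smallness far below the forced scale $3^{-k}$. In particular neither uniform convergence of $(h_n^{-1})$ nor injectivity of $h|_Y$ follows from what you wrote; nothing in the construction prevents two distinct points of $Y$ from being caught at every stage in a common support square of diameter $O(3^{-k})$, in which case their images converge to the same point of $S$. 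Your final sentence concedes the point by proposing to invoke ``the classical theorem of \Sier'' directly --- but that theorem \emph{is} the proposition under proof, so as a self-contained argument the proposal is circular exactly where the work lies. (Citing \cite{Sier} is of course legitimate, and is precisely what the paper does; but then the preceding construction is unnecessary rather than a proof.)
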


The remainder of this paper deals with a technique for finding classical Swiss cheese sets (and thus homeomorphs of the \Sier carpet) as subsets plane sets that are built using Swiss cheeses, such as those discussed in our survey.
\section{Classicalisation of Swiss cheeses}
For a Swiss cheese $\mathbf D=(\Delta, \mathcal D)$, we define $\delta(\mathbf D)=r(\Delta)-\sum_{D\in\mathcal D}r(D)$. Note that $\delta (\mathbf D)<-\infty$ if and only if $\sum_{D\in\mathcal D}r(D)<\infty$.  We shall prove the following theorem.
\begin{thm}\label{classical}
For every Swiss cheese  $\mathbf D$ with $\delta(\mathbf D)>0$, there is a classical
Swiss cheese $\mathbf D'$ with $X_{\mathbf D'}\subseteq X_{\mathbf D}$ and $\delta(\mathbf D')\ge\delta(\mathbf D)$.
\end{thm}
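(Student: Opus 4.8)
The plan is to reformulate the problem in terms of the open set removed from the plane. Writing $\Omega=(\C\setminus\Delta)\cup\bigcup\mathcal D$, we have $X_{\mathbf D}=\C\setminus\Omega$, and the requirement $X_{\mathbf D'}\subseteq X_{\mathbf D}$ is equivalent to $\Omega\subseteq\Omega'$; that is, we are allowed to \emph{enlarge} the removed region. A configuration is classical precisely when the removed discs have pairwise disjoint closures, each closure lies in $\up{int}(\Delta)$, and $\sum_{D\in\mathcal D}r(D)<\infty$ (automatic here, since $\delta(\mathbf D)>0$ forces $\sum r(D)<r(\Delta)$, and hence each $r(D)<r(\Delta)$). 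So the task is to enlarge $\Omega$ to such a classical form while keeping $\delta(\mathbf D)=r(\Delta)-\sum r(D)$ from decreasing.

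The two geometric operations I would build on are as follows. First, a \emph{merge}: if open discs $D_1,D_2$ with centres $c_1,c_2$ satisfy $\overline{D_1}\cap\overline{D_2}\neq\emptyset$, then $|c_1-c_2|\le r(D_1)+r(D_2)$, so the disc centred on the segment joining the centres of radius $\tfrac12\bigl(|c_1-c_2|+r(D_1)+r(D_2)\bigr)\le r(D_1)+r(D_2)$ contains $D_1\cup D_2$. Replacing the pair by this single disc enlarges $\Omega$, leaves $r(\Delta)$ fixed, and does not increase $\sum r(D)$, so $\delta$ does not decrease. Second, a \emph{boundary absorption}: if $D=B(a,\rho)$ protrudes from $\Delta=B(c,R)$, i.e. $|a-c|+\rho\ge R$, then shifting the centre away from $D$ by $s=\tfrac12(R-|a-c|+\rho)$ along $u=(a-c)/|a-c|$ gives $\Delta'=B(c-su,R')$ with $\Delta'\subseteq\Delta$, $\Delta'\cap D=\emptyset$ and
\[ R'=\tfrac{1}{2}\bigl(R+|a-c|-\rho\bigr)\ge R-\rho. \]
Deleting $D$ and replacing $\Delta$ by $\Delta'$ again enlarges $\Omega$ (because $\C\setminus\Delta'\supseteq(\C\setminus\Delta)\cup D$) and changes $\delta$ by $\rho-(R-R')\ge 0$; the bound $\rho<R$ keeps every quantity positive.

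With these engines the natural first step is to organise the discs by intersecting closures: let $\sim$ be the equivalence relation generated by $D\sim D'$ when $\overline D\cap\overline{D'}\neq\emptyset$, and single out the class of discs chained through intersecting closures to $\C\setminus\up{int}(\Delta)$. For a bounded class $C$ I would enumerate its discs so each successive disc clashes with an earlier one (possible since $C$ is connected in the intersection graph) and apply the merge repeatedly; the enclosing discs increase, with radii bounded by the partial sums of $\sum_{D\in C}r(D)$, so by summability they converge to a single disc $D_C'\supseteq\bigcup_{D\in C}D$ with $r(D_C')\le\sum_{D\in C}r(D)$. The boundary class is absorbed into a new compact disc $\Delta'$ by the analogous limiting use of boundary absorption. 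Summing the class-by-class bounds then yields $\delta(\mathbf D')\ge\delta(\mathbf D)$, with $\Omega$ only growing.

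The main obstacle is that this single pass need not be classical: distinct classes have disjoint closures, but the enclosing discs $D_C'$ are larger than the clusters they cover and may overlap one another or protrude from the new $\Delta'$ (for instance when one cluster nests in a concavity of another), so fresh clashes and protrusions reappear and the procedure must be iterated and passed to a limit. Proving that this iteration converges to a genuinely classical configuration is the heart of the matter: one must show the removed regions increase to an open set whose complement is bounded by countably many discs with \emph{pairwise disjoint closures}, each strictly inside a limiting disc $\Delta'$, with summable radii and $\delta(\mathbf D')\ge\delta(\mathbf D)$. I expect this to require quantitative bookkeeping of the kind provided by the paper's allocation maps: a systematic allocation of each original disc's radius to the final disc that eventually absorbs it, so that the budget $\delta$ is manifestly preserved in the limit and so that disjointness of \emph{closures} (not merely of interiors, which would leave tangencies) and strict interiority are forced by the construction rather than recovered afterwards. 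Controlling this limit — ensuring the shifting centres of the successive $\Delta'$ converge and that no clash survives all stages — using the summability $\sum r(D)<\infty$ and the monotonicity of $\delta$, is where the real work lies.
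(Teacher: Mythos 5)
Your two geometric engines are exactly the paper's own local moves, with the same estimates: the merge is the paper's Case~1 (two discs $E,E'$ with $\overline E\cap\overline{E'}\ne\emptyset$ are replaced by a disc $E''\supseteq E\cup E'$ with $r(E'')\le r(E)+r(E')$), and the boundary absorption is Case~2 (a protruding disc $E$ is deleted and $\Delta$ shrunk to $H'$ with $r(H')\ge r(H)-r(E)$), and your checks that each move enlarges the removed region without decreasing $\delta$ are sound. But the global argument is where your proposal stops, and you say so yourself: ``proving that this iteration converges to a genuinely classical configuration is the heart of the matter \dots\ where the real work lies.'' That is a genuine gap, not a routine verification. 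A sequential class-by-class pass creates fresh clashes (as you note, the enclosing discs $D_C'$ may overlap, and closures of unions of distinct classes can meet even when all pairwise closures are disjoint), a second pass creates more, and after $\omega$ passes new tangencies can appear \emph{in the limit} (two sequences of merged discs creeping toward each other), forcing stage $\omega+1$, and so on. Nothing in your sketch shows this transfinite process stabilises, and the bookkeeping needed to control it --- tracking which original discs' radii pay for which limiting disc, so that (A2)/(A3)-type budget inequalities survive passage to limits --- is precisely what you defer to ``quantitative bookkeeping of the kind provided by the paper's allocation maps.''

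The paper's proof closes this gap by never running the iteration at all. It puts a partial order on the set $\mathcal S(\mathbf D)$ of pairs $(\mathbf E,f)$ with $f\colon\widetilde{\mathbf D}\to\widetilde{\mathbf E}$ an allocation map, proves that every chain has an upper bound (Lemma~\ref{gemax}: the limit cheese is built from nested unions of open discs and nested intersections of closed discs via Proposition~\ref{Facts}, with $\liminf$/$\limsup$ estimates showing the limit map again satisfies (A2) and (A3) --- this is the once-and-for-all version of the limit analysis you were worried about), and then applies Zorn's lemma. Maximality replaces termination: if the maximal element were not classical, one of your two moves (Cases~1 and~2) would produce a strictly larger element, a contradiction. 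In particular, the issue you flag about disjointness of closures versus mere disjointness of interiors is handled automatically, since Case~1 fires on any tangency, so no tangency can survive at a maximal element. So your proposal correctly identifies the local mechanism and correctly anticipates that an allocation-style ledger is needed, but it is missing the one idea that makes the proof work --- trading an explicit (possibly transfinite) iteration for a maximality argument --- and without it the convergence claim at the centre of your plan remains unproved.
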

Now, most of the examples mentioned in the survey section of this paper allow us to make a free choice of $\Delta$, and to specify that  $\sum_{D\in\mathcal D}r(D)$ be arbitrarily small. Hence, those plane sets may be taken to contain a classical Swiss cheese set as a subset. This is important because, if $X$ and $Y$ are compact plane sets with $Y\subseteq X$, then many properties of $R(X)$ are shared by $R(Y)$. We give some examples in the following proposition which is elementary, is probably well known, and appears as
\cite[Lemma 2.1.1]{MeThesis}.
\begin{prop}\label{subset}
 Let $X$ and $Y$ be compact plane sets with $Y\subseteq X$. Then:
\begin{itemize}
\item[(i)] if $R(X)$ is trivial then so is $R(Y)$;
 \item[(ii)] if $R(X)$ does not have any non-zero bounded point derivations, then neither does $R(Y)$;
\item[(iii)] if $R(X)$ is normal, then so is $R(Y)$.
\end{itemize}
\end{prop}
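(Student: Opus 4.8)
The plan is to prove Proposition \ref{subset} by exploiting the natural restriction map from $R(X)$ to $R(Y)$ and verifying each of the three inheritance properties (i), (ii), (iii) in turn. The key elementary observation is that whenever $Y\subseteq X$ are compact plane sets, every rational function with poles off $X$ is in particular a rational function with poles off $Y$, so restriction $f\mapsto f|_Y$ sends $R_0(X)$ into $R_0(Y)$. Since restriction is norm-decreasing (indeed $\unorm{f|_Y}\le\unorm{f}$ because $Y\subseteq X$), it extends to a norm-decreasing algebra homomorphism $\rho:R(X)\to R(Y)$ whose image is dense (it already contains $R_0(Y)$, which is dense in $R(Y)$). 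This map $\rho$ is the single tool that drives all three parts.

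\medskip
\noindent\textbf{Part (i).} First I would suppose $R(X)$ is trivial, i.e.\ $R(X)=C(X)$, and show $R(Y)=C(Y)$. Given $g\in C(Y)$, the Tietze extension theorem produces some $G\in C(X)=R(X)$ with $G|_Y=g$; then $g=\rho(G)\in \rho(R(X))\subseteq R(Y)$. Since $g$ was arbitrary, $R(Y)=C(Y)$, so $R(Y)$ is trivial. This step is routine once Tietze is invoked.

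\medskip
\noindent\textbf{Part (iii).} Next I would treat normality, since it is conceptually close to part (i). Assume $R(X)$ is normal and take a closed set $F\subseteq Y$ and a compact set $E\subseteq Y\setminus F$. Viewing $F$ and $E$ as subsets of $X$ (where $E$ is still compact and $F$ is closed in $X$ because $Y$ is closed in $X$, as both are compact), normality of $R(X)$ yields $f\in R(X)$ with $f(E)\subseteq\{1\}$ and $f(F)\subseteq\{0\}$. Then $\rho(f)=f|_Y\in R(Y)$ witnesses normality of $R(Y)$ for the pair $(E,F)$. The only point needing care is the identification of the naturality/normality data under restriction, but the separation values $0$ and $1$ transfer verbatim.

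\medskip
\noindent\textbf{Part (ii), the main obstacle.} The hard part will be (ii): showing that if $R(X)$ has no non-zero bounded point derivations, then neither does $R(Y)$. I would argue contrapositively. Suppose $d$ is a non-zero bounded point derivation on $R(Y)$ at a point $z\in Y$, so $d(fg)=f(z)d(g)+g(z)d(f)$ for $f,g\in R(Y)$. The natural candidate for a point derivation on $R(X)$ is the pullback $d\circ\rho$, which is bounded because both $d$ and $\rho$ are bounded, and which satisfies the Leibniz identity at $z\in Y\subseteq X$ since $\rho$ is a homomorphism and $\rho(f)(z)=f(z)$. The genuine obstacle is establishing that $d\circ\rho$ is \emph{non-zero}: a priori $d$ could be supported on the part of $R(Y)$ not reached by $\rho$. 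This is resolved by density of $\rho(R(X))$ in $R(Y)$ together with continuity of $d$: if $d\circ\rho=0$ then $d$ vanishes on the dense subspace $\rho(R(X))$, hence $d=0$ by boundedness, contradicting $d\ne0$. Thus $d\circ\rho$ is a non-zero bounded point derivation on $R(X)$, giving the contrapositive. I would conclude by remarking that the proof adapts immediately to point derivations of order $n$ by composing each $d_k$ with $\rho$, the higher Leibniz relations being preserved because $\rho$ is multiplicative.
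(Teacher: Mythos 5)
Your parts (i) and (iii) are correct, and since the paper gives no proof of this proposition (it cites \cite[Lemma 2.1.1]{MeThesis} and calls it elementary), the restriction-map route you take is surely the intended one; note also that neither (i) nor (iii) actually uses the density claim you make at the outset. But that claim --- that $\rho(R(X))$ is dense in $R(Y)$ because ``it already contains $R_0(Y)$'' --- is false, and it is exactly the step on which your proof of (ii) leans. Restriction sends $R_0(X)$ \emph{into} $R_0(Y)$, but not onto: a function in $R_0(Y)$ may have poles in $X\setminus Y$, and such functions need not be approximable from $R(X)|_Y$. Concretely, take $X=\overline{\D}$ and $Y=\T$. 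Then $1/z\in R_0(Y)$, and since $\bar z=1/z$ on $\T$, Stone--Weierstrass gives $R(Y)=C(\T)$; but $\rho(R(X))$ lies in the boundary-value algebra of the disc algebra, whose elements have vanishing negative Fourier coefficients, so the bounded functional $g\mapsto \frac{1}{2\pi}\int_0^{2\pi}g(e^{i\theta})e^{i\theta}\d\theta$ annihilates $\overline{\rho(R(X))}$ but takes the value $1$ at $1/z$. Hence the range is far from dense, and your argument that $d\circ\rho=0$ would force $d=0$ collapses.

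The gap is fixable within your framework by routing non-vanishing through the coordinate function $Z(w)=w$ rather than through density of the range. If $d$ is a bounded point derivation on $R(Y)$ at $z$, then $d(1)=0$, induction with Leibniz gives $d(p)=p'(z)d(Z)$ for polynomials $p$, and applying $d$ to $q\cdot(1/q)=1$ gives $d(1/q)=-q'(z)d(Z)/q(z)^2$ whenever $q$ has no zeros on $Y$; combining these yields $d(g)=g'(z)\,d(Z)$ for every $g\in R_0(Y)$. Since $R_0(Y)$ \emph{is} dense in $R(Y)$ (that much is true by definition) and $d$ is bounded, $d\ne 0$ forces $d(Z)\ne 0$. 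As $Z\in R_0(X)$ and $\rho(Z)=Z|_Y$, we get $(d\circ\rho)(Z)=d(Z)\ne 0$, so your pullback $d\circ\rho$ is indeed a non-zero bounded point derivation on $R(X)$ at $z$, completing the contrapositive. The same caveat applies to your closing remark about order-$n$ derivations: non-vanishing of the pulled-back sequence again needs an argument via the values $d_k(Z)$, not via density of $\rho(R(X))$.
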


In order to prove Theorem \ref{classical} we shall need the following collection of facts. The proofs are elementary and may be found in \cite{MeThesis}.

\begin{prop}\label{Facts}
\begin{itemize}
\item[(a)] \label{discs} Let $\mathcal F$ be a non-empty, nested collection of open discs in $\C$, such that $\sup\{r(E):E\in\mathcal F\}<\infty$. Then
$\bigcup \mathcal F$ is an open disc, $E$, and there is a nested increasing  sequence
$\seq{D}{n}\subseteq \mathcal F$ such that $\bigcup_{n\in\N}D_n=E$. Furthermore, if we order $\mathcal F$  by inclusion,
\[r(E)=\lim_{n\rightarrow\infty}r(D_n)=\lim_{D\in\mathcal F}r(D)=\sup_{D\in\mathcal F}r(D).\]

\item [(b)] \label{discs2}
 Let $\mathcal F$ be a non-empty, nested collection of closed discs in $\C$. Then $\Delta:=\bigcap \mathcal F$ is a closed disc
or a singleton and there is a nested decreasing  sequence
$\seq{D}{n}\subseteq \mathcal F$ such that $\bigcap_{n\in\N}D_n=\Delta$. Furthermore, if we order $\mathcal F$ by reverse inclusion, then
\[r(\Delta)=\lim_{n\rightarrow\infty}r(D_n)=\lim_{D\in \mathcal F}r(D)=\inf_{D\in \mathcal F}r(D).\]
\end{itemize}
\end{prop}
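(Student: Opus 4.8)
The plan is to prove (a) directly and then obtain (b) by the dual argument. The single elementary fact underlying everything is the containment criterion for discs: for open discs, $B(a,r)\subseteq B(a',r')$ if and only if $\abs{a-a'}+r\le r'$, and the analogous statement holds for closed discs. In particular the radius function is monotone along $\mathcal F$: if $D\subseteq D'$ then $r(D)\le r(D')$. Since $\mathcal F$ is nested it is totally ordered by inclusion, so $D\mapsto r(D)$ is a bounded monotone function on a chain, and the first task is to extract a countable sequence realising the supremum.

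For (a), set $s=\sup\{r(D):D\in\mathcal F\}$, which is finite by hypothesis. First I would choose $D_n'\in\mathcal F$ with $r(D_n')>s-1/n$, and then replace $D_n'$ by the largest of $D_1',\dots,D_n'$ (which exists because any finite subcollection of a chain has a greatest element under inclusion) to obtain a nested increasing sequence $\seq{D}{n}$ with $r(D_n)\uparrow s$. Writing $D_n=B(a_n,r_n)$, the containment $D_n\subseteq D_{n+1}$ gives $\abs{a_n-a_{n+1}}\le r_{n+1}-r_n$ via the criterion, so $\sum_n\abs{a_n-a_{n+1}}\le s-r_1<\infty$ and $\seq{a}{n}$ is Cauchy; let $a=\lim_n a_n$, noting $\abs{a_n-a}\le s-r_n$. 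A short estimate then shows $\bigcup_n D_n=B(a,s)$: containment of each $D_n$ is immediate from $\abs{a_n-a}+r_n\le s$, and conversely any $z$ with $\abs{z-a}<s$ lies in $D_n$ once $2r_n>\abs{z-a}+s$, which holds for all large $n$.

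The key remaining step, and the one I expect to be the main obstacle, is to upgrade $\bigcup_n D_n=B(a,s)$ to $\bigcup\mathcal F=B(a,s)$, i.e. to show the countable subsequence already captures the whole union. Here I would use the nestedness as a dichotomy: given any $D\in\mathcal F$, for each $n$ either $D\subseteq D_n$ or $D_n\subseteq D$. If $D\subseteq D_n$ for some $n$ then $D\subseteq\bigcup_n D_n$; otherwise $D_n\subseteq D$ for every $n$, forcing $r(D)\ge s$ hence $r(D)=s$, and the centre estimate forces the centre of $D$ to equal $a$, so that $D=B(a,s)$ itself. In either case $D\subseteq B(a,s)$, giving $\bigcup\mathcal F=B(a,s)=:E$. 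The stated chain of equalities $r(E)=\lim_n r(D_n)=\lim_{D\in\mathcal F}r(D)=\sup_{D\in\mathcal F}r(D)$ then follows at once, the net limit along inclusion agreeing with the supremum because $r$ is monotone and bounded.

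For (b) I would run the exact dual: put $t=\inf\{r(D):D\in\mathcal F\}$, extract a nested decreasing sequence $\seq{D}{n}$ with $r(D_n)\downarrow t$ (using least elements of finite subcollections), deduce $\sum_n\abs{a_n-a_{n+1}}\le r_1-t<\infty$ from $D_{n+1}\subseteq D_n$ so that $a_n\to a$ with $\abs{a_n-a}\le r_n-t$, and check $\bigcap_n D_n=\overline{B(a,t)}$ (a closed disc when $t>0$, the singleton $\{a\}$ when $t=0$). The same nestedness dichotomy, now with the helpful inclusion reversed, shows any $D\in\mathcal F$ contains $\overline{B(a,t)}$, whence $\bigcap\mathcal F=\overline{B(a,t)}=\Delta$ and the infimum and limit equalities follow as before. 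The only genuine care needed throughout is the degenerate radius-zero cases, which reduce to the trivial observation that unions and intersections of singletons or empty sets behave as the convention on $r(\cdot)$ demands.
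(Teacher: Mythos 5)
Your proof is correct. Note that the paper itself gives no argument for this proposition: it simply records that ``the proofs are elementary and may be found in \cite{MeThesis}'', so there is no in-paper proof to compare against; your write-up supplies exactly the kind of elementary argument being alluded to. The two points that genuinely need care are both handled properly: first, the extraction of a monotone sequence realising $\sup$ (resp.\ $\inf$) of the radii, using that finite subsets of a chain have greatest (resp.\ least) elements, together with the disc-containment criterion $\abs{a-a'}+r\le r'$ to get $\sum_n\abs{a_n-a_{n+1}}<\infty$ and hence convergence of the centres; second, and more importantly, the upgrade from $\bigcup_n D_n=B(a,s)$ to $\bigcup\mathcal F=B(a,s)$ (and dually for intersections), which your comparability dichotomy --- either $D\subseteq D_n$ for some $n$, or $D_n\subseteq D$ for all $n$, in which case $r(D)=s$ and the centre estimate forces $D=B(a,s)$ --- settles completely. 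Your construction in (b) also automatically yields the nonemptiness of $\bigcap\mathcal F$ and correctly produces the singleton case when $t=0$, matching the statement's ``closed disc or a singleton'', and the net-limit equalities follow from monotonicity of $r$ on the chain as you say.
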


\begin{dfn}\label{above}
Let $\mathbf D=(\Delta,\mathcal D)$ be a Swiss cheese. We define
\[\widetilde {\mathbf D}=\mathcal D\cup\{\C\-\Delta\}.\]
Now let $\mathbf E=(H,\mathcal E)$ be a second Swiss cheese, and let $f:\widetilde {\mathbf D}\rightarrow \widetilde {\mathbf E}.
$
We define $\mathcal G(f)=f^{-1}(\{\C\-H\})\cap\mathcal D$.
We say that $f$ is an \emph{allocation map} if  the following hold:
\begin{itemize}
\item[(A1)] for each $U\in\widetilde{\mathbf D}$, $U\subseteq f(U)$;
\item[(A2)]
$$\sum_{D\in \mathcal G(f)}r(D)\ge r(\Delta)-r(H);$$
\item[(A3)]  for each $E\in\mathcal E$,
$$\sum_{D\in f^{-1}(E)}r(D)\ge r(E).$$
\end{itemize}
If there is an allocation map from $\widetilde{\mathbf D}$ to $\widetilde{\mathbf E}$ we say that \emph{$\mathbf E$ is above
$\mathbf D$}.
\end{dfn}
Note that these axioms imply that $f$ is surjective. In particular, since there is no disc $D$ with $\C\-\Delta\subseteq D$ we have $f(\C\-\Delta)=\C\- H$.

Thus, if $\mathbf E$ is above $\mathbf D$, then (A1) implies that $H\subseteq\Delta$.
The following properties of of allocation maps are elementary consequences of the definition. Full details of the proofs may be found in \cite{MeThesis}.
\begin{prop}\label{allfacts}
\begin{itemize}
\item[(i)]\label{Rtran}
Let  $\mathbf D_1=(\Delta_1,\mathcal D_1)$, $\mathbf D_2=(\Delta_2,\mathcal D_2)$ and
$\mathbf D_3=(\Delta_3,\mathcal D_3)$ be Swiss cheeses
and let
\[
f:\widetilde{\mathbf D}_1\rightarrow\widetilde{\mathbf D}_{2}
\]
\[
g:\widetilde{\mathbf D}_2\rightarrow\widetilde{\mathbf D}_3
\]
be allocation maps. Then $g\circ f$,  is an allocation map form $\widetilde{\mathbf D_1}$ to $\widetilde{\mathbf D_3}$.
\item[(ii)]\label{Rref}
Let $\mathbf D=(\Delta,\mathcal D)$ be a Swiss cheese. Then the identity map from $\widetilde{\mathbf D}$ to itself is an allocation 
map. Suppose further that $\sum_{D\in\mathcal D}r(D)<\infty$. Then the identity map is the unique allocation map from
$\widetilde{\mathbf D}$ to itself.
\item[(iii)]\label{Ssub}
Suppose that $\mathbf D=(\Delta, \mathcal D)$ and $\mathbf E=(H, \mathcal E)$ are Swiss cheeses such that  $\mathbf E$ is above $\mathbf D$. Then $X_\mathbf E\subseteq X_\mathbf D$.
\item[(iv)]\label{abovelength}
Let $\mathbf D=(\Delta, \mathcal D)$ and $\mathbf E=(H, \mathcal E)$ be Swiss cheeses such that $\mathbf E$ is above $\mathbf D$.
Then
\[
\delta(\mathbf E)\ge \delta(\mathbf D).
\]
\end{itemize}
\end{prop}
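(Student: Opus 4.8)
The plan is to verify all four parts directly from axioms (A1)--(A3). The structural fact underlying everything is that an allocation map $f\colon\widetilde{\mathbf D}\to\widetilde{\mathbf E}$, with $\mathbf D=(\Delta,\mathcal D)$ and $\mathbf E=(H,\mathcal E)$, partitions $\mathcal D$: the only element of $\widetilde{\mathbf D}$ outside $\mathcal D$ is $\C\setminus\Delta$, which maps to $\C\setminus H$, so each $D\in\mathcal D$ maps either to $\C\setminus H$ (hence lies in $\mathcal G(f)$) or to a unique $E\in\mathcal E$ (hence lies in $f^{-1}(E)\subseteq\mathcal D$). Thus $\mathcal D$ is the disjoint union of $\mathcal G(f)$ and the sets $f^{-1}(E)$ for $E\in\mathcal E$. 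I would prove (iii) and (iv) first, as they use this most directly.

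For (iii), take $z\in X_{\mathbf E}=H\setminus\bigcup\mathcal E$. Since $H\subseteq\Delta$ we have $z\in\Delta$. If $z\in D$ for some $D\in\mathcal D$, then (A1) gives $z\in f(D)$; but $f(D)$ is either some $E\in\mathcal E$, contradicting $z\notin\bigcup\mathcal E$, or $\C\setminus H$, contradicting $z\in H$. Hence $z\in\Delta\setminus\bigcup\mathcal D=X_{\mathbf D}$. For (iv), summing radii over the partition gives $\sum_{D\in\mathcal D}r(D)=\sum_{D\in\mathcal G(f)}r(D)+\sum_{E\in\mathcal E}\sum_{D\in f^{-1}(E)}r(D)$; bounding the first term below by $r(\Delta)-r(H)$ via (A2) and each inner sum below by $r(E)$ via (A3) yields $\sum_{D\in\mathcal D}r(D)\ge\left(r(\Delta)-r(H)\right)+\sum_{E\in\mathcal E}r(E)$. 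Rearranging (interpreting sums in the extended reals when one diverges, in which case the claim is immediate) gives $r(H)-\sum_{E\in\mathcal E}r(E)\ge r(\Delta)-\sum_{D\in\mathcal D}r(D)$, i.e. $\delta(\mathbf E)\ge\delta(\mathbf D)$.

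For (i), I verify the axioms for $h:=g\circ f$. Axiom (A1) follows by nesting, $U\subseteq f(U)\subseteq g(f(U))$. For (A3), fix $F\in\mathcal D_3$; since $g^{-1}(F)\subseteq\mathcal D_2$ we have $h^{-1}(F)=\bigcup_{E\in g^{-1}(F)}f^{-1}(E)$ as a disjoint union, so $\sum_{D\in h^{-1}(F)}r(D)=\sum_{E\in g^{-1}(F)}\sum_{D\in f^{-1}(E)}r(D)\ge\sum_{E\in g^{-1}(F)}r(E)\ge r(F)$, applying (A3) for $f$ then for $g$. For (A2), a short computation tracking which discs reach $\C\setminus\Delta_3$ shows $\mathcal G(h)=\mathcal G(f)\cup\bigcup_{E\in\mathcal G(g)}f^{-1}(E)$ (a disjoint union); summing radii and applying (A2) for $f$, then (A3) for $f$, then (A2) for $g$ gives $\sum_{D\in\mathcal G(h)}r(D)\ge\left(r(\Delta_1)-r(\Delta_2)\right)+\left(r(\Delta_2)-r(\Delta_3)\right)=r(\Delta_1)-r(\Delta_3)$. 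Identifying $\mathcal G(h)$ correctly (a disc reaches $\C\setminus\Delta_3$ either through $\C\setminus\Delta_2$ or through a disc of $\mathcal G(g)$) is the only delicate bookkeeping here.

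For (ii), the identity satisfies the axioms trivially: (A1) holds with equality, $\mathcal G(\id)=\emptyset$ makes (A2) read $0\ge 0$, and $\id^{-1}(E)=\{E\}$ gives (A3) with equality. The main obstacle is uniqueness under $\sum_{D\in\mathcal D}r(D)<\infty$. Running a self-allocation $f$ through the computation of (iv) with $\mathbf E=\mathbf D$, the two ends of the inequality chain coincide and all sums are finite, so every inequality is an equality: $\sum_{D\in\mathcal G(f)}r(D)=0$ and $\sum_{D\in f^{-1}(E)}r(D)=r(E)$ for each $E$. Writing $\mathcal D^+=\{D\in\mathcal D:r(D)>0\}$ (which equals $\mathcal D$ once degenerate discs are excluded by convention), the first equality shows $f$ maps $\mathcal D^+$ into $\mathcal D^+$, since a positive-radius disc cannot lie in the zero-total set $\mathcal G(f)$, and here $D\subseteq f(D)$ gives $r(D)\le r(f(D))$. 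As $\{D:r(D)\ge r(E)\}$ is finite for each $E\in\mathcal D^+$, the nested orbit $E,f(E),f^2(E),\dots$ takes values in this finite set and so is eventually constant at a fixed disc $E^\ast=f(E^\ast)$. For any fixed disc $F$, the equality $\sum_{D\in f^{-1}(F)}r(D)=r(F)$ together with $F\in f^{-1}(F)$ forces $f^{-1}(F)\cap\mathcal D^+=\{F\}$; taking $F=E^\ast$ and using minimality of the first time the orbit of $E$ reaches $E^\ast$ forces $E=E^\ast$, so $f(E)=E$. Hence $f$ fixes every disc and $f=\id$.
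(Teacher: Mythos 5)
Your proposal is correct in all four parts. There is nothing in the paper itself to compare it against: the authors state only that these properties are ``elementary consequences of the definition'' and defer all details to the thesis \cite{MeThesis}, so you have in effect supplied the omitted proof. Your verification is the natural one, and its engine --- the observation that $f(\C\setminus\Delta)=\C\setminus H$ (noted in the paper just after Definition \ref{above}), so that $\mathcal D$ is partitioned into $\mathcal G(f)$ and the fibres $f^{-1}(E)$, $E\in\mathcal E$ --- correctly drives (iii), (iv), and the bookkeeping identities $h^{-1}(F)=\bigcup_{E\in g^{-1}(F)}f^{-1}(E)$ and $\mathcal G(g\circ f)=\mathcal G(f)\cup\bigcup_{E\in\mathcal G(g)}f^{-1}(E)$ in (i); your treatment of divergent sums in (iv) via $\sum_{E\in\mathcal E}r(E)\le\sum_{D\in\mathcal D}r(D)$ is also fine. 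The only genuinely non-routine step is uniqueness in (ii), and your argument there is sound: with source and target equal and $\sum_{D\in\mathcal D}r(D)<\infty$, the inequality chain from (iv) collapses to equalities, giving $\sum_{D\in\mathcal G(f)}r(D)=0$ and $\sum_{D\in f^{-1}(E)}r(D)=r(E)$ for each $E$; summability makes $\{D:r(D)\ge r(E)\}$ finite, so the nested orbit $E\subseteq f(E)\subseteq f^2(E)\subseteq\cdots$ is eventually constant at a fixed disc $E^*$, and equality in (A3) at $E^*$ together with minimality of the hitting time forces $E=E^*$. Two small remarks. First, your hedge about degenerate discs is not merely cosmetic: if radius-zero ``discs'' were admitted in $\mathcal D$, uniqueness would genuinely fail (such elements could be reallocated freely without violating (A1)--(A3)), so the convention that the open discs of a Swiss cheese have positive radius is actually needed, and under it $\mathcal D^+=\mathcal D$ as you say. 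Second, to conclude $f=\id$ on all of $\widetilde{\mathbf D}$ you should add the trivial observation, already implicit in your opening structural fact, that $f(\C\setminus\Delta)=\C\setminus\Delta$ since no disc contains an unbounded set.
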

We note that parts (i) and (ii) of the preceding proposition show that taking Swiss cheeses as objects, and allocation maps as morphisms gives a
(small) category. Thus, we may consider sub-categories such as the category of Swiss cheeses, $\mathbf D$, such that
$\delta(\mathbf D)>0$ and allocation maps.
Now fix a Swiss cheese $\mathbf D$, and let $\mathcal S(\mathbf D)$ be the collection of all pairs $(\mathbf E, f)$ such that
$\mathbf E$ is a Swiss cheese and $f:\widetilde{\mathbf D}\rightarrow\widetilde{\mathbf E}$ is an allocation map. Note that, for all $(\mathbf E, f)\in \mathcal S(D)$, $\mathbf E$ is above $\mathbf D$. We define a binary
relation, $\ge$, on $\mathcal S(\mathbf D)$ by saying $(\mathbf E', f')\ge(\mathbf E, f)$ if there is an allocation map
$g:\widetilde{\mathbf E}\rightarrow\widetilde{\mathbf E'}$ such that $g\circ f=f'$. Note that, since $f$ is onto, any such $g$ is unique.
\begin{lem}\label{allge}
Let $\mathbf D$ be a Swiss cheese such that $\delta(\mathbf D)>-\infty$. Then the binary relation $\ge$
defined above is a partial order on $\mathcal S(\mathbf D)$.
\end{lem}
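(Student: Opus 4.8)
The plan is to verify in turn that $\ge$ is reflexive, transitive and antisymmetric on $\mathcal{S}(\mathbf{D})$. Reflexivity and transitivity are immediate from the categorical structure already recorded in Proposition \ref{allfacts}. For reflexivity, given $(\mathbf{E},f)\in\mathcal{S}(\mathbf{D})$, the identity map on $\widetilde{\mathbf{E}}$ is an allocation map by Proposition \ref{allfacts}(ii), and it satisfies $\id\circ f=f$, so $(\mathbf{E},f)\ge(\mathbf{E},f)$. For transitivity, if $(\mathbf{E}'',f'')\ge(\mathbf{E}',f')$ is witnessed by an allocation map $h$ and $(\mathbf{E}',f')\ge(\mathbf{E},f)$ by an allocation map $g$, then $h\circ g$ is an allocation map by Proposition \ref{allfacts}(i) and $(h\circ g)\circ f=h\circ(g\circ f)=h\circ f'=f''$, giving $(\mathbf{E}'',f'')\ge(\mathbf{E},f)$.

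The substantive part is antisymmetry, and this is where the hypothesis $\delta(\mathbf{D})>-\infty$ is used. Suppose $(\mathbf{E}',f')\ge(\mathbf{E},f)$ and $(\mathbf{E},f)\ge(\mathbf{E}',f')$, witnessed by allocation maps $g\colon\widetilde{\mathbf{E}}\to\widetilde{\mathbf{E}'}$ with $g\circ f=f'$ and $h\colon\widetilde{\mathbf{E}'}\to\widetilde{\mathbf{E}}$ with $h\circ f'=f$, where I write $\mathbf{E}=(H,\mathcal{E})$ and $\mathbf{E}'=(H',\mathcal{E}')$. First I would note that since $(\mathbf{E},f),(\mathbf{E}',f')\in\mathcal{S}(\mathbf{D})$, both $\mathbf{E}$ and $\mathbf{E}'$ are above $\mathbf{D}$, so Proposition \ref{allfacts}(iv) gives $\delta(\mathbf{E})\ge\delta(\mathbf{D})>-\infty$ and likewise $\delta(\mathbf{E}')\ge\delta(\mathbf{D})>-\infty$; hence both $\mathcal{E}$ and $\mathcal{E}'$ have summable radii. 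This is precisely the hypothesis needed to invoke the uniqueness clause of Proposition \ref{allfacts}(ii).

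Next I would form the composites $h\circ g\colon\widetilde{\mathbf{E}}\to\widetilde{\mathbf{E}}$ and $g\circ h\colon\widetilde{\mathbf{E}'}\to\widetilde{\mathbf{E}'}$, which are allocation maps by Proposition \ref{allfacts}(i). By the uniqueness just secured, each equals the relevant identity map, so $g$ and $h$ are mutually inverse bijections. Now the content of axiom (A1) is that $U\subseteq g(U)$ for every $U\in\widetilde{\mathbf{E}}$ and $V\subseteq h(V)$ for every $V\in\widetilde{\mathbf{E}'}$; applying the second inclusion with $V=g(U)$ gives $U\subseteq g(U)\subseteq h(g(U))=U$, so $g(U)=U$ as a subset of $\C$. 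Thus $g$ acts as the identity on the underlying sets, and since $g$ is a surjection onto $\widetilde{\mathbf{E}'}$ we conclude $\widetilde{\mathbf{E}}=\widetilde{\mathbf{E}'}$ as collections of sets. Matching the distinguished complement element, $g(\C\setminus H)=\C\setminus H'$ by the surjectivity remark following Definition \ref{above}, while $g(\C\setminus H)=\C\setminus H$ by the previous step, so $H=H'$, whence $\mathcal{E}=\mathcal{E}'$ and $\mathbf{E}=\mathbf{E}'$. Finally $g=\id_{\widetilde{\mathbf{E}}}$ yields $f'=g\circ f=f$, so $(\mathbf{E},f)=(\mathbf{E}',f')$, as required.

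I expect antisymmetry to be the only real obstacle: the argument hinges entirely on knowing that the identity is the \emph{unique} self-allocation of each $\widetilde{\mathbf{E}}$, and this uniqueness can fail without summability of the radii. The role of the standing assumption $\delta(\mathbf{D})>-\infty$ is exactly to propagate summability upward through Proposition \ref{allfacts}(iv) to every $\mathbf{E}$ above $\mathbf{D}$, so that the uniqueness in Proposition \ref{allfacts}(ii) is available throughout $\mathcal{S}(\mathbf{D})$.
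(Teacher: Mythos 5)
Your proof is correct and takes essentially the same route as the paper's: reflexivity and transitivity via Proposition \ref{allfacts}(i) and (ii), and antisymmetry by composing the two witnessing maps, applying the uniqueness clause of Proposition \ref{allfacts}(ii) (which is available because part (iv) and $\delta(\mathbf D)>-\infty$ give summable radii for every cheese above $\mathbf D$), and then squeezing with (A1) to get $g(U)=U$. Your explicit identification of $H=H'$ via the distinguished elements $\C\setminus H$ and $\C\setminus H'$ simply fills in a step the paper leaves implicit.
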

\begin{proof}
First, we show that $\ge$ is reflexive. Let $(\mathbf E, f)\in\mathcal S(\mathbf D)$. By Proposition \ref{Rref} the identity map
$\up{id}:\widetilde{\mathbf E}\rightarrow\widetilde{\mathbf E}$ is an allocation map. Clearly $\up{id}\circ f=f$ and so
$(\mathbf E, f)\ge(\mathbf E, f)$.

Now, we show that $\ge$ is transitive. Let $(\mathbf E_1, f_1),(\mathbf E_2, f_2), (\mathbf E_3, f_3)\in\mathcal S(\mathbf D)$ such
that  $(\mathbf E_2, f_2)\ge(\mathbf E_1, f_1)$ and $(\mathbf E_3, f_3)\ge(\mathbf E_2, f_2)$. Then, there are allocation maps,
$g_{1,2}:\widetilde{\mathbf E}_1\rightarrow\widetilde{\mathbf E}_2$, such that $g_{1,2}\circ f_1=f_2$,  and
$g_{2,3}:\widetilde{\mathbf E}_2\rightarrow\widetilde{\mathbf E}_3$, such that $g_{2,3}\circ f_2=f_3$.
Set $g_{1,3}=g_{2,3}\circ g_{1,2}$. Then, by part (i) of Proposition \ref{Rtran}, $g_{1,3}$ is an allocation map from $\widetilde{\mathbf E_1}$ to $\widetilde{\mathbf E_3}$. Also
\[g_{1,3}\circ f_1=(g_{2,3}\circ g_{1,2})\circ f_1=g_{2,3}\circ (g_{1,2}\circ f_1)=g_{2,3}\circ f_2=f_3,\]
and so $(\mathbf E_3, f_3)\ge(\mathbf E_1, f_1)$.

Finally, we show that $\ge$ is antisymmetric. Let $(\mathbf E_1, f_1),(\mathbf E_2, f_2)\in\mathcal S(\mathbf D)$ such
that  $(\mathbf E_2, f_2)\ge(\mathbf E_1, f_1)$ and $(\mathbf E_1, f_1)\ge(\mathbf E_2, f_2)$. Then, there are allocation maps,
$g_{1,2}:\widetilde{\mathbf E}_1\rightarrow\widetilde{\mathbf E}_2$, such that $g_{1,2}\circ f_1=f_2$,  and
$g_{2,1}:\widetilde{\mathbf E}_2\rightarrow\widetilde{\mathbf E}_1$, such that $g_{2,1}\circ f_2=f_1$. Set
\[g=g_{2,1}\circ g_{1,2}:\widetilde{\mathbf E}_1\rightarrow\widetilde{\mathbf E}_1.\]
Then, by part (i) of Proposition \ref{Rtran}, $g$ is an allocation map. Since $\mathbf E$ is above $D$, $\delta(\mathbf E_1)>-\infty$ and so, by part (ii) of Proposition \ref{Rref}, $g$ is the identity map on $\widetilde{\mathbf E}_1$. Now, let
$U\in\widetilde{\mathbf E}_1$. It follows easily that
\[U\subseteq g_{1,2}(U)\subseteq g_{2,1}(g_{1,2}(U))=g(U)=U,\]
so $g_{1,2}(U)=U$. Similarly, if $U\in\widetilde{\mathbf E}_2$, then $g_{2,1}(U)=U$. Thus $(\mathbf E_1, f_1)=(\mathbf E_2, f_2)$.
\end{proof}
\begin{lem}\label{gemax}
Let $\mathbf D$ be a Swiss cheese such that $\delta(\mathbf D)>0$, and let $\mathcal C$ be a chain in
$(\mathcal S(\mathbf D),\ge)$. Then $\mathcal C$ has an upper bound in $(\mathcal S(\mathbf D),\ge)$.
\end{lem}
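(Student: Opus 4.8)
The plan is to construct an explicit upper bound $(\mathbf E^*,f^*)$ for $\mathcal C$ as a ``direct limit'' of the chain, obtained by taking unions of the images of the allocation maps in $\mathcal C$. If $\mathcal C=\emptyset$ then $(\mathbf D,\id)\in\mathcal S(\mathbf D)$ is trivially an upper bound, so I assume $\mathcal C\ne\emptyset$ and write its members as $(\mathbf E_i,f_i)$ with $\mathbf E_i=(H_i,\mathcal E_i)$. Since $\ge$ is total on $\mathcal C$, whenever $(\mathbf E_j,f_j)\ge(\mathbf E_i,f_i)$ (written $j\ge i$) there is a unique allocation map $g_{i,j}\colon\widetilde{\mathbf E}_i\rightarrow\widetilde{\mathbf E}_j$ with $g_{i,j}\circ f_i=f_j$, and (A1) gives $f_i(U)\subseteq f_j(U)$ for every $U\in\widetilde{\mathbf D}$; thus each family $\{f_i(U)\}_i$ is nested. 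I would first set $H^*:=\bigcap_i H_i$; by Proposition \ref{Facts}(b) this is a closed disc or a singleton, and since $r(H_i)\ge\delta(\mathbf E_i)\ge\delta(\mathbf D)>0$ (using Proposition \ref{allfacts}(iv)) it is in fact a genuine compact disc --- this is the one place where the hypothesis $\delta(\mathbf D)>0$ is essential. For $U\in\widetilde{\mathbf D}$ I would define $f^*(U):=\bigcup_i f_i(U)$; using (A3) the radii $r(f_i(U))$ are bounded above by $\sum_{D\in\mathcal D}r(D)<\infty$, so by Proposition \ref{Facts}(a) each bounded $f^*(U)$ is an open disc, while the unbounded ones all equal $\bigcup_i(\C\setminus H_i)=\C\setminus H^*$. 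Setting $\mathcal E^*:=\{f^*(D):D\in\mathcal D\}\setminus\{\C\setminus H^*\}$ then makes $\mathbf E^*:=(H^*,\mathcal E^*)$ a Swiss cheese and $f^*\colon\widetilde{\mathbf D}\rightarrow\widetilde{\mathbf E^*}$ a well-defined map.

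It then remains to check that $(\mathbf E^*,f^*)$ lies in $\mathcal S(\mathbf D)$ and dominates every $(\mathbf E_i,f_i)$. The key observation is that it suffices to produce, for each $i$, an allocation map $g_i\colon\widetilde{\mathbf E}_i\rightarrow\widetilde{\mathbf E^*}$ with $g_i\circ f_i=f^*$: this simultaneously witnesses $(\mathbf E^*,f^*)\ge(\mathbf E_i,f_i)$ and, by Proposition \ref{allfacts}(i), shows $f^*=g_i\circ f_i$ is an allocation map, so that $(\mathbf E^*,f^*)\in\mathcal S(\mathbf D)$. Since $f_i$ is onto, the requirement $g_i\circ f_i=f^*$ forces $g_i(V):=f^*(D)$ for any $D$ with $f_i(D)=V$ (and $g_i(\C\setminus H_i):=\C\setminus H^*$); I would verify this is well defined by observing that $f_i(D)=f_i(D')$ implies $f_j(D)=f_j(D')$ for all $j\ge i$ (apply $g_{i,j}$), whence $f^*(D)=f^*(D')$. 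With $g_i$ so defined one has $g_i\circ f_i=f^*$ by construction, and (A1) for $g_i$ is then routine from the definitions and the fact that $H^*\subseteq H_i$.

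The main obstacle is verifying the two summation axioms (A2) and (A3) for $g_i$, and here I would argue by passing to the limit along the chain. The mechanism is a monotonicity of the relevant preimage sets: since $g_{i,j}(\C\setminus H_i)=\C\setminus H_j$, a disc sent into the complement by some $f_j$ is sent into the complement by all later maps, and one checks that for $j\ge i$ one has $\mathcal G(g_{i,j})\subseteq\mathcal G(g_i)$ and, for a disc $E=f^*(D)\in\mathcal E^*$, that $g_{i,j}^{-1}(f_j(D))\subseteq g_i^{-1}(E)$ (two discs that merge at level $j$ stay merged, so share the same limit disc $E$). Applying (A2) and (A3) for the allocation map $g_{i,j}$ then gives, for every $j\ge i$,
\[
\sum_{V\in\mathcal G(g_i)}r(V)\ge r(H_i)-r(H_j),\qquad \sum_{V\in g_i^{-1}(E)}r(V)\ge r(f_j(D)).
\]
Taking the supremum over $j\ge i$ and invoking Proposition \ref{Facts}(b) (so that $\inf_j r(H_j)=r(H^*)$) and Proposition \ref{Facts}(a) (so that $\sup_j r(f_j(D))=r(E)$) yields precisely (A2) and (A3) for $g_i$, completing the construction of the upper bound. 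The delicate points are the uniform boundedness of the radii that legitimises the union construction and the definition of $H^*$, and the interchange of the sums over $\mathcal G(g_i)$ and $g_i^{-1}(E)$ with the limit taken along the chain.
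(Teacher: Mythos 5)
Your proposal is correct and is essentially the paper's own proof: the same direct-limit construction (taking $H=\bigcap_i H_i$, which is a genuine disc because $r(H_i)\ge\delta(\mathbf E_i)\ge\delta(\mathbf D)>0$, taking $f(U)=\bigcup_i f_i(U)$ with the radii controlled via (A3) and Proposition \ref{Facts}, and the induced maps $g_i$ with $g_i\circ f_i=f$), together with the same monotone passage to the limit along the chain to verify (A2) and (A3). The only difference is organisational: you verify the axioms for the $g_i$ directly and deduce that $f=g_i\circ f_i$ is an allocation map by composition (Proposition \ref{allfacts}(i)), whereas the paper verifies them for $f$ directly and then observes that $\{j\in\mathcal C:j\ge i\}$ is a chain in $\mathcal S(\mathbf E_i)$, so the same argument yields that each $g_i$ is an allocation map --- a cosmetic reversal, not a different method.
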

\begin{proof}
For $i\in\mathcal C$ we write $i=(\mathbf E_i, f_i)$ and $\mathbf E_i=(H_i,\mathcal E_i)$, and for $j\in\mathcal C$ with $j\ge i$, we let
\[g_{i,j}:\widetilde{\mathbf E}_i\rightarrow\widetilde{\mathbf E}_j,\]
be the unique (as discussed above) allocation map such that $g_{i,j}\circ f_i=f_j$. From uniqueness, it follows easily that \begin{equation}\label{compose}
 g_{i,k}=g_{j,k}\circ g_{i,j}\qquad(i\le j\le k\in\mathcal C).
\end{equation}

Note that $\{H_i:i\in\mathcal C\}$ is a nested decreasing collection of closed discs and, for each $D\in\mathcal D$, $\{f_i(D):i\in\mathcal C\}$ is a nested increasing collection of open plane sets.

By part (iv) of Proposition \ref{abovelength}, we have
\begin{equation}
r(H_i)\ge \delta (\mathbf E_i)\ge \delta(\mathbf D)>0.\label{rHi}
\end{equation}
Let $H=\bigcap_{i\in \mathcal C}H_i.$ By part (b) of Proposition \ref{discs2}, $H$ is a compact disc or singleton with
\[
r(H)=\lim_{i\in \mathcal C}r(H_i).
\]
By (\ref{rHi}) $r(H)>0$, so $H$ is a compact disc.
Now we define a map as follows
\func{f}{\widetilde{\mathbf D}}{\mathcal P(\C)}{U}{\bigcup_{i\in \mathcal C}f_i(U).}
First, we note that
\[
f\left(\C\-\Delta\right)=\bigcup_{i\in\mathcal C}\left(\C\-H_i\right)
=\C\-\bigcap_{i\in\mathcal C}H_i=\C\-H.
\]
Note also that, if $D\in\mathcal D$, then exactly one of the following two cases holds.
\begin{itemize}
\item[(i)] There exists $i\in\mathcal C$ such that $f_i(D)=\C\- H_i$. In this case, for $j\ge i$, we have, since $f_j=g_{i,j}\circ f_i$, that
$f_j(D)=g_{i,j}\left(\C\-H_i\right)=\C\-H_j$. Thus $f(D)=\C\-H$.
\item[(ii)] For each $i\in\mathcal C$, $f_i(D)\in\mathcal E_i$. In this case, $\{f_i(D):i\in\mathcal C\}$ is a
collection of open discs, with $f_i(D)\subseteq f_j(D)$, if $i\le j$. Also, for each $i\in \mathcal C$,
 we have (since $f_i$ satisfies (A3)), that
\begin{equation}
 r(f_i(D))\le\sum_{D\in f_i^{-1}(f_i(D))}r(D)\le \sum_{D\in \mathcal D}r(D)<\infty\label{rfi}.
\end{equation}
Thus, by part (a) of Proposition \ref{Facts}, $f(D)=\bigcup_{i\in\mathcal C}f_i(D)$ is an open disc with $r(f(D))=\lim_{i\in\mathcal C}r(f_i(D))$.
\end{itemize}
Hence,
\[
\mathcal E:=\left\{f(U):U\in \mathcal D\right\}\-\{\C\-H\}.
\]
is a collection of open discs.

Set $\mathbf E=(H,\mathcal E)$. By the above, $\mathbf E$ is a Swiss cheese. By definition, $f\left(\widetilde{\mathbf D}\right)=\widetilde{\mathbf E}$. We claim that $f$ (considered as map into
$\widetilde{\mathbf E}$) is an allocation map. That $f$ satisfies (A1) is trivial.

To show that $f$ satisfies (A2), note that, by the
argument for case (i), above,
\[ \bigcup_{i\in\mathcal C} \left(f_i^{-1}(\C\-H_i)\right)\subseteq f^{-1}(\C\-H),\]
i.e.
\[\mathcal G_i\subseteq\mathcal G(f)\quad(i\in\mathcal C).
\]
Thus, since $r(H)=\lim_{i\in \mathcal C}r(H_i)$, and each $f_i$ satisfies (A2),   we have
\begin{eqnarray}
\nonumber r(H)&=&\liminf_{i\in\mathcal C}r(H_i)\\
\nonumber &\ge&\liminf_{i\in\mathcal C}\left(r(\Delta)-\sum_{D\in \mathcal G(f_i)}r(D)\right)\\
&\ge&r(\Delta)-\sum_{D\in \mathcal G(f)}r(D)\label{a3}.
\end{eqnarray}
Hence, $f$ satisfies (A2).

To show that $f$ satisfies (A3), let $E\in\mathcal E$, and let $U\in\mathcal D$ be such that $f(U)=E$. Let
$i,j\in\mathcal C$ with $j\ge i$, and let $D\in\mathcal D$ such that $f_i(U)=f_i(D)$. Then, since $f_j=g_{i,j}\circ f_i$, we have that 
$f_j(D)=f_j(U)$, and so $f(D)=f(U)=E$.  Thus,
\[
\bigcup_{i\in \mathcal C}f_i^{-1}(f_i(U))\subseteq f^{-1}(f(U))=f^{-1}(E).
\]
Since $r(E)=\lim_{i\in\mathcal C}r(f_i(U))$, and each $f_i$ satisfies (A3), we have
\begin{eqnarray}
\nonumber r(E)&=&\limsup_{i\in \mathcal C}r(f_i(U))\\
\nonumber &\le&\limsup_{i\in \mathcal C}\left(\sum_{D\in f_i^{-1}(f_i(U))}r(D)\right)\\
\label{a4}&\le&\sum_{D\in f^{-1}(E)}r(D).
\end{eqnarray}
Thus $f$ satisfies (A3), and so is an allocation map.

We claim that $(\mathbf E, f)$ is the upper bound we require. To see this, let $i=(\mathbf E_i, f_i) \in\mathcal C$, define $I_i=\{j\in\mathcal C:j\ge i\}$, and take $U\in\widetilde{\mathbf E}_i$. Then, there exists $V\in \widetilde{\mathbf D}$ such that
 $U=f_i(V)$. Now let $j\in \mathcal C$ with  $ j\ge i$. Then  $g_{i,j}(U)=f_j(V)$. Thus we have
\[
\bigcup_{j\in I_i}g_{i,j}(U)=\bigcup_{j\in\mathcal C}f_{j}(V)=f(V)\in\widetilde{\mathbf E}.
\]
Hence, we can define a map,
\begin{eqnarray*}
g_i&:& \widetilde{\mathbf E}_i \rightarrow \widetilde{\mathbf E}\\
&& U \mapsto \bigcup_{j\in I_i}g_{i,j}(U),
\end{eqnarray*}
and we have $g_i\circ f_i=f$.

It remains to show that $g_i$ is an allocation map. To see this, note that it follows from the equation (\ref{compose}) that $I_i$ is a chain in $\mathcal S(\mathbf E_i)$, and so the
proof that $f$ is an allocation map also shows that $g_i$ is. The result follows.
\end{proof}
\begin{proof}[Proof of Theorem \ref{classical}]
By Proposition \ref{Rref} (part (ii)) and Lemmas \ref{allge} and \ref{gemax},
$\left(\mathcal S(\mathbf D), \ge\right)$ is a non-empty, partially ordered set such that every chain has an upper bound. Hence, we may apply Zorn's lemma to obtain a maximal element $(\mathbf E, f)$ of
$\left(\mathcal S(\mathbf D), \ge\right)$. By part (iii) of  Proposition \ref{Ssub}, we have that
$X_\mathbf E\subseteq X_\mathbf D$. Since $\mathbf E$ is above $\mathbf D$, $\delta (\mathbf E)\ge\delta(\mathbf D)>0$.

It remains to show that $\mathbf E$ is a classical Swiss cheese. Towards a contradiction, we assume otherwise.
 Then we must have at least one of the
following cases.
\begin{itemize}
\item[Case 1:] There exist $E, E'\in\mathcal E$ such that $\overline E\cap \overline E'\ne \emptyset$. In this case there exists an open disc $E''$ with $E\cup E'\subseteq E''$ and $r(E'')\le r(E)+r(E')$, as in Figure \ref{amal1}.
\begin{figure}
\begin{center}

\input{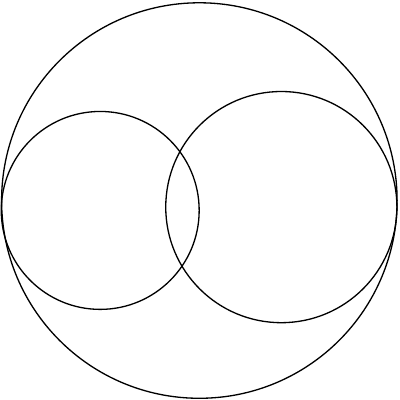_t}

\caption{$E$, $E'$ and $E''$.}
\label{amal1}
\end{center}
\end{figure}
Let
$\mathcal E'=\left(\mathcal E\-\{E, E'\}\right)\cup\{E''\}$ and $\mathbf E'=(H,\mathcal E')$
and define $g:\widetilde{\mathbf E}\rightarrow\widetilde{\mathbf E'}$ by
$$g(U)=\left\{
\begin{array}{ll}E''&\textrm{if }U\in\{E,E'\}\\
U&\textrm{otherwise.}
\end{array}\right.$$
Then it is easy to check that $g$ is an allocation map. By part (i) of Proposition \ref{Rtran},  $g\circ f$ is an allocation map, and so
$(\mathbf E', f\circ g)\in\mathcal S(\mathbf D)$ with $(\mathbf E', f\circ g)>(\mathbf E, f)$.
\item[Case 2:] There exists $E\in\mathcal E$ such that $\overline E\not\subseteq \up{int}(H)$. Assume that this case holds and Case 1 does not.

 By the condition on the sum of the radii, $\up{int}(H)\not\subseteq \overline E$.  Then there exists a compact disc $H'$ such that  $D\subseteq \left(\C\-H'\right)$ and $r(H')\ge r(H)-r(E)$ as in Figure \ref{amal2}.
\begin{figure}
\begin{center}

\input{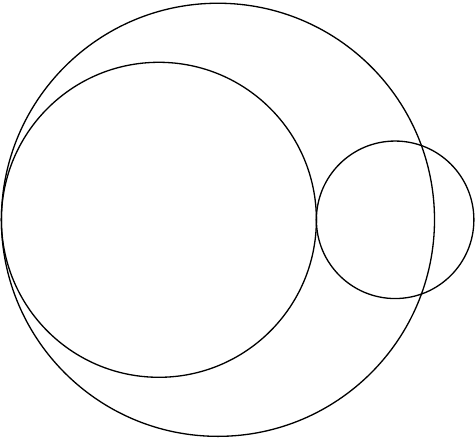_t}

\caption{$E$, $H$ and $H'$.}
\label{amal2}
\end{center}
\end{figure}
 Let
$\mathcal E'=\mathcal E\-\{E\}$ and $\mathbf E'=(H',\mathcal E')$ and define
$g:\widetilde{\mathbf E}\rightarrow\widetilde{\mathbf E'}$ by
\[
g(U)=\left\{
\begin{array}{ll}\C\-H'&\textrm{if }U\in\{E,\C\-H\}\\
U&\textrm{otherwise.}
 \end{array}\right.
 \]
Then it is easy to check that $g$ is an allocation map. By part (i) of Proposition \ref{Rtran} $g\circ f$ is an allocation map and so 
$(\mathbf E', f\circ g)\in\mathcal S(\mathbf D)$ with $(\mathbf E', f\circ g)>(\mathbf E, f)$.
\end{itemize}
In either case we have a contradiction to the maximality of $(\mathbf E, f)$. The result follows.
\end{proof}
We are grateful to Prof.~J.~K.~Langley for pointing out to to us that the method we use to combine discs in Case 1 has previously appeared in the literature in the setting of finite unions of open discs. Zhang implicitly uses this method on page 50 of \cite{Zhang}.

\smallskip

Theorem \ref{classical} has the following  purely topological corollary.
\begin{cor}
 Let $\mathbf D$ be a Swiss cheese such that $\delta (\mathcal D)>0$ and
$X_{\mathbf D}$ has empty interior in $\C$. Then  $X_{\mathbf D}$ is a universal plane curve.
\end{cor}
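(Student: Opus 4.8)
The plan is to reduce the statement to the classical case already settled by Corollary \ref{Sierc} and Proposition \ref{topdim}, using Theorem \ref{classical} to produce a classical Swiss cheese sitting inside $X_{\mathbf D}$. First I would apply Theorem \ref{classical} to $\mathbf D$ (the hypothesis $\delta(\mathbf D)>0$ is exactly what is needed), obtaining a classical Swiss cheese $\mathbf D'$ with $X_{\mathbf D'}\subseteq X_{\mathbf D}$ and $\delta(\mathbf D')\ge\delta(\mathbf D)>0$. Since $X_{\mathbf D'}$ is a subset of $X_{\mathbf D}$, which is assumed to have empty interior in $\C$, the set $X_{\mathbf D'}$ also has empty interior. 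Thus $X_{\mathbf D'}$ is a classical Swiss cheese set with empty interior, so Corollary \ref{Sierc} shows it is homeomorphic to the \Sier carpet, and Proposition \ref{topdim} then tells us that $X_{\mathbf D'}$ is itself a universal plane curve.

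It then remains to transfer both defining properties of a universal plane curve from the smaller set $X_{\mathbf D'}$ to $X_{\mathbf D}$. For the dimension condition, I would note that $X_{\mathbf D}$ is a compact subset of $\R^2$ with empty interior, so the stated consequence of Hurewicz's theorem gives that its topological dimension is at most $1$; combining this with monotonicity of topological dimension under passage to subsets and the fact that the subset $X_{\mathbf D'}$ already has dimension $1$, we conclude that $X_{\mathbf D}$ has topological dimension exactly $1$. For the universality condition, given any compact plane set $Y$ of topological dimension at most $1$, the universality of $X_{\mathbf D'}$ supplies a subset $Y'\subseteq X_{\mathbf D'}$ homeomorphic to $Y$; since $X_{\mathbf D'}\subseteq X_{\mathbf D}$, this same $Y'$ is a subset of $X_{\mathbf D}$, which is precisely what is required.

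The proof is essentially a concatenation of the earlier results, with the substantive content of the argument already absorbed into Theorem \ref{classical}, so I do not anticipate any serious obstacle. The only point demanding a little care is the verification that the topological dimension of $X_{\mathbf D}$ equals $1$ rather than merely being bounded above by $1$: this relies on monotonicity of topological dimension under taking subsets, a standard fact that is not explicitly recorded in the excerpt but which is needed to push the dimension up from the embedded copy of the \Sier carpet.
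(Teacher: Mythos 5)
Your proposal is correct and follows essentially the same route as the paper's own proof: apply Theorem \ref{classical} to obtain a classical Swiss cheese set inside $X_{\mathbf D}$, invoke Corollary \ref{Sierc} to identify it with the \Sier carpet, and then use Proposition \ref{topdim} to embed any compact plane set of dimension at most $1$. In fact you are slightly more thorough than the paper, which only verifies the embedding property; your explicit check that $X_{\mathbf D}$ has topological dimension exactly $1$ (at most $1$ by the quoted Hurewicz--Wallman criterion since the interior is empty, at least $1$ by monotonicity of dimension over the embedded carpet, valid here since these are separable metric spaces) fills a detail the paper's proof leaves implicit.
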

\begin{proof}
By Theorem \ref{classical} there is a classical Swiss cheese set $Y$, with  $Y\subseteq X_{\mathbf D}$. By Corollary \ref{Sierc} $Y$ is 
homeomorphic to the \Sier carpet $S$. Let $E$ be a compact plane set with topological dimension less than or equal to $1$. Then by  Theorem \ref{topdim} there is  a plane set $E'$ homeomorphic to $E$ with $E'\subseteq Y\subseteq X_\mathbf D$.
\end{proof}
Note, in particular, that for any two such Swiss cheese sets,   each may be  continuously embedded in the other.

We are now able to use known examples of non-classical Swiss cheeses $X$ such that $R(X)$ has particular properties to construct new
examples using classical Swiss cheeses (in particular, to produce examples of essential uniform algebras on the \Sier carpet). We give 
the following example.
\begin{ex}There is a classical Swiss cheese set $X$ such that  $R(X)$ is normal.
\end{ex}
\begin{proof}
 By Proposition \ref{regex},  there is a Swiss cheese $\mathbf D =(\Delta,\mathcal D)$,  such that
\[\delta(\mathbf D)>0,\]
and $R(X_{\mathbf D})$ is normal. By Theorem \ref{classical}, there is a classical Swiss cheese set $X$ with $X\subseteq X_\mathbf D$. By Proposition \ref{subset}, $R(X)$ is normal.
\end{proof}
We note that, by Proposition \ref{prop} and Proposition \ref{subset}, we could in addition insist that $R(X)$ have no non-zero bounded point derivations.

We do not yet know whether the techniques in this paper can be adapted so that they preserve the existence of point derivations, or of other derivations into the dual of $R(X)$. Proofs of the existence of such derivations  often use Proposition \ref{Bon}  (see, for example, \cite{FeinsteinMorris}). If one wishes to avoid using that result, other ``work-arounds'' can typically be found (see, for example, Theorem 3.3.8 of \cite{MeThesis}).
\section{Open questions}
We finish with some open questions.
\begin{question}\label{sierq1}
Let $X$ be a compact plane set such that $R(X)\ne C(X)$. Does it follow that $X$ has a subset $S$ homeomorphic to the \Sier carpet?
\end{question}
\begin{question}\label{sierq2}
Let $X$ be a compact plane set such that $R(X)\ne C(X)$. Does it follow that $X$ has a subset $S$ homeomorphic to the \Sier carpet such that $R(S)\ne C(X)$?
\end{question}
\begin{question}\label{sierq2.5}
Let $X$ be a plane set such that $R(X)\ne C(X)$. Does it follow that $X$ has a subset which is a classical Swiss cheese set?
\end{question}
\begin{question}\label{sierq3}
Let $X$ be a compact plane set such that there exists a non-trivial, natural uniform algebra on $X$. Does it follow that $X$ has a subset  homeomorphic to the \Sier carpet?
\end{question}
\begin{question}\label{sierq4}
Let $X$ be a compact metric space such that there exists a non-trivial, natural uniform algebra on $X$. Does it follow that $X$ has a subset homeomorphic to the \Sier carpet?
\end{question}
We note that a positive answer to Question \ref{sierq3} would imply a negative answer to the following - a famous problem due to Gel$'$fand.
\begin{question}\label{Gelf}
Is there a non-trivial, natural uniform algebra on th interval $[0,1]$?
\end{question}

\bibliographystyle{amsplain}

\def\cprime{$'$}
\providecommand{\bysame}{\leavevmode\hbox to3em{\hrulefill}\thinspace}
\providecommand{\MR}{\relax\ifhmode\unskip\space\fi MR }
\providecommand{\MRhref}[2]{%
  \href{http://www.ams.org/mathscinet-getitem?mr=#1}{#2}
}
\providecommand{\href}[2]{#2}

\end{document}